\theoremstyle{definition}
\newtheorem{thm}{Theorem}[section]
\newtheorem{lem}[thm]{Lemma}
\newtheorem*{xrem}{Remark}
\numberwithin{equation}{section}
\newcommand{\keywords}[1]{\noindent\emph{Keywords:}\enspace#1}
\definecolor{DarkBlue}{rgb}{0,0,.8}
\def\b{\boldsymbol}
\def\e{\varepsilon}
\def\sgn{\mbox{sgn}}
\begin{document}

\title{An Eulerian formulation of immersed interface method for membranes with tangential stretching}

\author{Lei Li\\
Department of Mathematics, Duke University, Durham, NC 27708\\
leili@math.duke.edu}
\date{}

\maketitle

\begin{abstract}
The forces generated by moving interfaces usually include the parts due to tangential stretching. We derive an evolution equation for the tangential stretching, which then forms the basis of an Eulerian formulation based on level set functions. The jump conditions are then derived using the level set and stretch functions. Compared with the traditional jump conditions under Lagrangian formulation,  the jump conditions under the Eulerian formulation are compact and clean. The work here makes possible a local level set formulation for immersed interface method to simulate membranes or vesicles where the tangential forces are present.

\keywords{Eulerian formulation; stretching; tangential forces; immersed interface method; jump conditions}
\end{abstract}

\section{Introduction}
The moving interface problems in viscous fluids make up an important class of fluid-structure interaction problems \cite{peskin02,sm11}.  Among the various methods for dealing with moving boundaries, two classes of methods have seen a lot of applications: the immersed boundary method (IBM) invented by Peskin \cite{peskin02} and the immersed interface method (IIM) by Leveque $\&$ Li  \cite{ll94}. The forces generated by the interfaces are usually singularly concentrated. In the immersed boundary method, the forces are smeared out from the interface to grid points using smoothened delta functions and then the usual solvers for fluid equations are applied. In the immersed interface method, the singular forces are built into the jump conditions of the velocity, pressure and their derivatives. The immersed boundary method is usually first order (\cite{peskin02, mori08}), while the immersed interface method is expected to be more accurate since the smoothened delta functions are avoided (\cite{ll97,lilai01, beale15}). In \cite{ll03}, a hybrid of the two methods was used in cases where the tangential forces are not significant, but the accuracy introduced by the immersed interface method may be reduced if the tangential forces are significant.  

If Lagrangian markers (i.e. material coordinates) are used to track the interface, interpolation and resampling are usually needed after a certain time. Meanwhile, in the stiff interface case, the implicit schemes are desired to evolve the interface to avoid some instabilities \cite{ll03}. In an Eulerian formulation, we have a fixed grid in space and only update the quantities on these fixed grid points, instead of tracking each material point. Regarding the issues mentioned, the Eulerian formulation behaves better.  The level set method (\cite{os88, sethian1999level, of01}) is an important tool for the Eulerian formulation of moving interfaces (\cite{chmo96, sso94, xz03}). For the immersed boundary method, Cottet and Maitre \cite{cm04, cm06} proposed an Eulerian formulation based on level set functions since they showed that the stretching can be kept in the gradient of the level set function. However, the reinitialization becomes an issue as we desire to preserve the stretch information. In \cite{lxs17}, a locally gradient preserving reinitialization was proposed to overcome this issue. For the immersed interface method, in \cite{lilai01, tlk10}, the level set method was proposed to solve some problems, but the Eulerian formulation for interfaces with tangential forces was not developed systematically. The idea in \cite{cm04} could be applied to immersed interface methods, but using the gradient of the level set to compute the stretching may cause inaccuracy for the jump conditions. On one side, when the stretching is too big or too small, the level set function becomes poor in capturing the location and on the other side, the stretching itself is obtained from finite difference.

In this work, we derive the equation for the stretching directly. The jump conditions then are derived under an Eulerian fashion based on the stretch and level set functions. The derived jump conditions are clean and compact. This then makes possible a local level set method for simulating the membranes where the tangential forces are significant using immersed interface method in a simpler way. The paper is organized as follows. In Section \ref{sec:setup}, we give a brief description of the setup of the problem and some preliminaries. In Section \ref{sec:stretch}, we derive the evolution equation for the stretch function. In Section \ref{sec:eulerian}, we formulate a closed system with a fully Eulerian fashion based on the level set and stretch functions. In Section \ref{sec:jump}, we derive the jump conditions for the derivatives of the velocity field and pressure up to second order under the Eulerian formulation. Lastly, we briefly discuss how the the immersed interface method under our Eulerian formulation may be implemented in Section \ref{sec:iim}.

\section{Setup and preliminaries}\label{sec:setup}

Consider the Navier-Stokes equations that describe a viscous fluid in domain $\Omega$, $
\rho(\b{u}_t+\b{u}\cdot\nabla\b{u})-\mu\Delta\b{u}+\nabla p=\b{f}$, and $\nabla\cdot\b{u}=0$. 
$\rho$ is the density, $\mu$ is the viscosity, $p$ is the pressure and $\b{u}$ is the velocity. In the case studied here, $\b{f}$ is the force generated by the immersed interface.  Scale the lengths by $L$, time by $T$, velocity by $U=L/T$ and the body force by $\rho L/T^2$. The Reynolds number in this case is $Re=\rho L^2/(\mu T)$. The dimensionless Navier-Stokes equations read
\begin{gather}\label{eq:momscaled}
\begin{array}{c}
\b{u}_t+\b{u}\cdot\nabla\b{u}+\nabla p=\frac{1}{Re}\Delta\b{u}+\b{f},\\
\nabla\cdot\b{u}=0.
\end{array}
\end{gather}

The immersed interface is assumed to be closed with codimension $1$.  We denote it by $\Gamma(t)$ and assume that it is described by $\xi\to \b{X}(\xi, t)$, where $\xi$ is a Lagrangian parameter. The material point is simply convected by the fluid velocity field $\b{u}$:
\begin{gather}
\b{X}_t=\b{u}(\b{X}, t).
\end{gather}

At $t=0$, we construct a level set function so that $\{x: \phi=0\}=\Gamma$, and $\phi>0$ inside $\Gamma$. If the level set function evolves according to
\begin{gather}\label{eq:levelset}
\phi_t+\b{u}\cdot\nabla\phi=0,
\end{gather}
then $\{x: \phi=0\}=\Gamma(t)$ for all $t>0$. 

In \cite{cm04}, it's shown that 
\begin{gather}
|\b{X}_{\xi}(\xi, t)|=\alpha(\xi)|\nabla\phi(\b{X}, t)|.
\end{gather}
If $\xi\in\mathbb{R}^2$ and $X\in\mathbb{R}^3$, $|\b{X}_{\xi}|=|\b{X}_{\xi_1}\times\b{X}_{\xi_2}|$. 
This result indicates that the magnitude of the gradient vector of the level set function evaluated at the interface may be used to track the tangential stretching $|\b{X}_{\xi}|$. However, this may not be accurate enough and not convenient, especially when we want to reinitialize the level set functions \cite{cm06} (One possible way for reinitialization has been proposed in \cite{lxs17}).  We instead find an equation for $\chi=|\b{X}_{\xi}|$ directly in Section \ref{sec:stretch}. 

We now introduce some notations and conventions that will be used.

 For $x\in\Gamma$, $\b{n}(x)$ is the inward normal of $\Gamma$ and $\b{n}=\nabla\phi/|\nabla\phi|$ is an extension of the normal vector. For a quantity $A$ and $x\in\Gamma$, define $A^+(x)=\lim_{s\to 0^+}A(x+s\b{n})$, $A^-(x)=\lim_{s\to 0^-}A(x+sn)$ and 
\begin{gather}\label{eq:spatialjump}
[A]=A^+-A^-.
\end{gather} 
 $\Gamma^{\e}$ is a banded domain that encloses $\Gamma$. The surface on the side where $\b{n}$ is pointing to is $\Gamma^+$ while $\Gamma^-$ is on the other side. The distance between $\Gamma^+$ and $\Gamma^-$ is made to be $\e$.  For example, if $\Gamma=\{(x, y, z): x^2+y^2+z^2=1\}$, we define $\Gamma^{+}=\{(x, y, z): x^2+y^2+z^2=(1-\e/2)^2\}$ and $\Gamma^{-}=\{(x, y, z): x^2+y^2+z^2=(1+\e/2)^2\}$. $\Gamma^{\e}$ is the shell corresponding to $1-\e/2\le\sqrt{x^2+y^2+z^2}\le 1+\e/2$.
 
 We define $\nabla \b{u}$ by
 \begin{gather}
 (\nabla\b{u})_{ij}=\partial_i u_j.
 \end{gather} 
 The dot product between $\nabla\b{u}$ and a vector is given by 
 \begin{gather}
 (\nabla\b{u}\cdot\b{v})_i=\partial_i u_j v_j, \\
 (\b{v}\cdot\nabla\b{u})_j=v_i\partial_iu_j,
 \end{gather}
 where repeated index means summation. We use $\b{a}\b{b}$ to mean the tensor produce between $\b{a}$ and $\b{b}$.

\section{Evolution of the streching}\label{sec:stretch}
In this section, we find the equation that the tangential stretching $\chi=|\b{X}_{\xi}|$ satisfies.  To do this, we first introduce several lemmas. Some of them appeared in \cite{cm04, laili01} but we still provide proofs in the new setting. 

The no-slip condition condition is satisfied at the interface and hence $\b{u}$ is continuous, or $[\b{u}]=0$.  Away from the interface, thanks to the viscosity, $\b{u}$ is smooth and we have the following:
\begin{lem}\label{ass:u}
$[\b{u}]=0$. $\b{u}$ is smooth at $x\notin\Gamma$ and $\nabla\b{u}$ has limits on both sides of $\Gamma$. 
\end{lem}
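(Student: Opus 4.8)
The plan is to prove the three assertions separately, the continuity $[\b{u}]=0$ first and then the regularity statements, the whole argument resting on the structure of the interfacial force. The crucial standing assumption I would make explicit is that $\b{f}$ is a single-layer (surface) distribution supported on $\Gamma$: concentrated like a Dirac measure in the normal direction and smooth along $\Gamma$, but carrying no normal derivative of the surface delta. Everything below hinges on this, so I would isolate it rather than leave it implicit.

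For $[\b{u}]=0$ I would argue distributionally across $\Gamma$ in the normal direction, matching the orders of singularity in the momentum equation \eqref{eq:momscaled}. Suppose, for contradiction, that $\b{u}$ jumps. Differentiating a jumping function twice in the normal direction produces a term proportional to $\delta'_\Gamma$ (the normal derivative of the surface delta) with coefficient $[\b{u}]$, so $\frac{1}{Re}\Delta\b{u}$ would contain $\frac{1}{Re}[\b{u}]\,\delta'_\Gamma$. By contrast, at a fixed time the terms $\b{u}_t$ and $\b{u}\cdot\nabla\b{u}$ are at worst bounded-with-a-jump (no spatial delta of any order from a continuous or merely discontinuous field that we will show cannot support $\delta'$), $\nabla p$ produces at most a surface delta $[p]\,\b{n}\,\delta_\Gamma$ from a pressure jump, and the single-layer $\b{f}$ is itself only a surface delta. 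Hence there is nothing on either side to balance the $\delta'_\Gamma$ term, forcing its coefficient $[\b{u}]$ to vanish. This is consistent with the physical requirement that the convection law $\b{X}_t=\b{u}(\b{X},t)$ assign a single-valued velocity to each material point on $\Gamma$.

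For smoothness at $x\notin\Gamma$, I would use that $\b{f}\equiv 0$ off $\Gamma$, so on each connected component of $\Omega\setminus\Gamma$ the pair $(\b{u},p)$ solves the forcing-free Navier-Stokes system. Treating the pressure as recovered from the divergence-free constraint and applying interior parabolic regularity with bootstrapping to the (quasilinear) momentum equation then yields $\b{u}\in C^\infty$ in the open region, provided the coefficients and the data are smooth.

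For the one-sided limits of $\nabla\b{u}$ I would regard each side of $\Gamma$ as a moving domain with boundary $\Gamma(t)$ and invoke regularity up to the boundary. Since $[\b{u}]=0$ supplies a continuous (and tangentially smooth) Dirichlet trace on $\Gamma$, and $\b{f}$ is smooth along $\Gamma$, parabolic Schauder estimates up to the boundary give that $\b{u}$ extends in $C^1$ (indeed $C^\infty$) up to $\Gamma$ from each side, so the limits defining $(\nabla\b{u})^{\pm}$ exist. The hard part will be this up-to-the-boundary estimate on a curved, moving interface: one must flatten $\Gamma(t)$ locally and control the variable coefficients introduced by the change of variables, which requires assuming sufficient regularity of $\Gamma(t)$ in space-time and of $\b{f}$ along $\Gamma$. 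I would state these as standing hypotheses of the model rather than attempt to establish them here.
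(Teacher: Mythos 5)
Your proposal supplies more of a proof than the paper itself does: in the paper this statement is essentially a standing assumption (its label is even \texttt{ass:u}), justified in a single sentence of physics --- the no-slip condition at the interface gives $[\b{u}]=0$, and viscosity makes $\b{u}$ smooth away from $\Gamma$ with one-sided limits of $\nabla\b{u}$; no distributional argument and no regularity theory appear. Your route is therefore genuinely different: you derive $[\b{u}]=0$ from the momentum equation itself by matching orders of singularity (only $\frac{1}{Re}\Delta\b{u}$ could produce a $\delta'_\Gamma$ term, with coefficient $[\b{u}]$, and neither the single-layer force nor the pressure gradient can balance it), and you obtain smoothness off $\Gamma$ and the one-sided traces of $\nabla\b{u}$ from interior and up-to-the-boundary parabolic regularity on each side of the moving interface. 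What this buys: the continuity of $\b{u}$ becomes a consequence of the assumed single-layer structure of $\b{f}$ rather than an independent physical postulate, which is conceptually cleaner and is in the same spirit as the pillbox/singularity bookkeeping the paper uses later (e.g.\ in Theorem \ref{thm:p}). What it costs: the Schauder-type estimates up to a curved moving boundary are heavy machinery that you honestly convert into standing hypotheses, so in the end your treatment, like the paper's, rests on unproven but reasonable assumptions --- you have merely traded ``$\b{u}$ is continuous and piecewise smooth'' for ``$\Gamma(t)$ and $\b{f}$ are regular enough for boundary regularity theory.'' One imprecision to fix: under your contradiction hypothesis $[\b{u}]\neq 0$, the term $\b{u}_t$ is not ``at worst bounded-with-a-jump''; since the interface moves, a jump in $\b{u}$ produces a surface delta of the form $-V_n[\b{u}]\delta_\Gamma$ in $\b{u}_t$, and $\b{u}\cdot\nabla\b{u}$ becomes an ill-defined product of a discontinuous function with a surface delta. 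Neither observation breaks the argument --- both terms are still of strictly lower singular order than $\delta'_\Gamma$, so the conclusion $[\b{u}]=0$ stands --- but the claim as written should be corrected.
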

This claim immediately implies that $\b{u}$ is Lipschitz continuous on any bounded domain.  
\begin{lem} 
The velocity field for the incompressible Navier-Stokes equations \eqref{eq:momscaled} satisfies the following jump conditions across the interface $\Gamma$
\begin{gather}
[\b{u}_t]+\b{u}\cdot[\nabla\b{u}]=0,\\
[\nabla\b{u}]\cdot\b{n}=0.
\end{gather}
\end{lem}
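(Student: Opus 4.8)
The plan is to derive both identities from the continuity $[\b{u}]=0$ of Lemma~\ref{ass:u}: the first by differentiating along the interface in time, the second by differentiating tangentially and then invoking incompressibility. For the first identity I would follow a material point $\b{X}(t)$ on $\Gamma$, which obeys $\dot{\b{X}}=\b{u}(\b{X},t)$ and stays on $\Gamma(t)$ for all $t$. Setting $g^\pm(t)=\b{u}^\pm(\b{X}(t),t)$ for the two one-sided limits, continuity gives $g^+(t)=g^-(t)$ for every $t$. Differentiating by the chain rule yields $\frac{d}{dt}g_j^\pm=\partial_t u_j^\pm+u_i\partial_i u_j^\pm$, since $\dot X_i=u_i$. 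Subtracting the two and noting that the coefficient $u_i=\dot X_i$ is common to both sides (the trajectory velocity is single-valued by continuity) produces $[\b{u}_t]_j+u_i[\partial_i u_j]=0$, which is exactly $[\b{u}_t]+\b{u}\cdot[\nabla\b{u}]=0$. This simply records that the material derivative of a field which stays continuous along the moving interface is itself continuous.

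For the second identity I would first establish the Hadamard (rank-one) structure of $[\nabla\b{u}]$. Differentiating $[u_j]=0$ along any curve in $\Gamma$ with tangent $\b{\tau}$ gives $\tau_k[\partial_k u_j]=0$, so for each fixed $j$ the vector with components $[\partial_k u_j]$ is orthogonal to the tangent plane and hence normal: $[\partial_k u_j]=a_j n_k$ with $a_j=n_k[\partial_k u_j]$. Using the convention $(\nabla\b{u}\cdot\b{v})_i=\partial_i u_j v_j$, it follows that $([\nabla\b{u}]\cdot\b{n})_i=[\partial_i u_j]n_j=n_i(a_j n_j)=n_i(\b{a}\cdot\b{n})$. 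It then remains only to show $\b{a}\cdot\b{n}=0$, and here incompressibility enters: setting $k=j$ in the rank-one relation gives $[\partial_j u_j]=a_j n_j=\b{a}\cdot\b{n}$, while $\nabla\cdot\b{u}=0$ on each side forces $[\nabla\cdot\b{u}]=0$. Hence $\b{a}\cdot\b{n}=0$ and $[\nabla\b{u}]\cdot\b{n}=0$.

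The main obstacle I anticipate is the regularity bookkeeping rather than the algebra: both steps differentiate the one-sided limits $\b{u}^\pm$ along $\Gamma$, so I must justify that these extend to one-sided $C^1$ fields up to the interface — in space for the tangential (Hadamard) step, and in space and time for the material-derivative step — which is precisely what Lemma~\ref{ass:u} together with the smoothness of $\b{u}$ off $\Gamma$ supplies. A secondary point to handle with care is that $\b{n}$ is a genuine unit normal only on $\Gamma$ (with $\b{n}=\nabla\phi/|\nabla\phi|$ merely an extension off it); all differentiations of the jump relations should therefore be performed strictly along $\Gamma$ — tangentially or following the trajectory — before evaluating, so that no derivative spuriously falls on $\b{n}$ itself.
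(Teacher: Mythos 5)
Your proposal is correct. The first identity is handled exactly as in the paper (differentiate $[\b{u}]=0$ along a material trajectory, using that the convecting velocity $\dot{\b{X}}=\b{u}$ is single-valued by continuity), but your proof of $[\nabla\b{u}]\cdot\b{n}=0$ takes a genuinely different route. The paper argues weakly: for each fixed $k$ it applies the divergence theorem to $\zeta\,\partial_k\b{u}$ over the thin shell $\Gamma^{\e}$, uses incompressibility to reduce $\nabla\cdot(\zeta\,\partial_k\b{u})$ to the bounded quantity $\partial_k\b{u}\cdot\nabla\zeta$, so the flux is $O(\e)$, and then lets $\e\to 0$ to conclude $\int_{\Gamma}\zeta[\partial_k\b{u}]\cdot\b{n}\,d\sigma=0$ for arbitrary test functions $\zeta$. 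You instead argue pointwise via the Hadamard rank-one structure: tangential differentiation of $[\b{u}]=0$ forces $[\partial_k u_j]=a_jn_k$, and the trace of this relation together with $[\nabla\cdot\b{u}]=0$ forces $\b{a}\cdot\b{n}=0$, whence $([\nabla\b{u}]\cdot\b{n})_i=n_i(\b{a}\cdot\b{n})=0$. Your route is more elementary (no test functions, no shell, no limiting argument) and actually yields strictly more: the full structure $[\nabla\b{u}]=\b{n}\b{a}$ with $\b{a}$ tangent, whose tangential part $(I-\b{n}\b{n})\cdot[\nabla\b{u}]=0$ is precisely the content of the paper's later Lemma \ref{lmm:tangent} applied to $\b{u}$ — your tangential-differentiation step is the same mechanism as that lemma's path-integral proof, so you are effectively front-loading it. What the paper's argument buys in exchange is (i) slightly weaker regularity demands — it needs only boundedness and one-sided limits of $\nabla\b{u}$ plus distributional incompressibility, never differentiating the one-sided traces along $\Gamma$ — and (ii) a template: the same shell-plus-test-function computation is reused for the pressure and normal-derivative jumps in Theorem \ref{thm:p} and Lemma \ref{lmm:normalgradp}. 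Your closing remarks on regularity are exactly the right caveats, and Lemma \ref{ass:u} supplies what your argument needs.
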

\begin{proof}
By $[\b{u}]=0$, taking the derivative along the material trajectory yields the first condition  (See also \cite{laili01}).

To show the second condition, we start with $\nabla\cdot\b{u}=0$, which holds in the distribution sense (in the case that $\b{u}$ is continuous, it is equivalent to that $\nabla\cdot\b{u}=0$ on both sides of the interface).  Pick a test function $\zeta$ which is smooth in the whole domain. We have by Divergence Theorem that
 \begin{gather}\label{eq:jumpgradu}
 \oint_{\partial \Gamma^{\e}}\zeta\sum_i\partial_k u_i \tilde{n}_i d\sigma=\int_{\Gamma^{\e}}\nabla\cdot(\zeta\partial_k \b{u})dV
 =\int_{\Gamma^{\e}}\partial_k\b{u}\cdot\nabla\zeta dV=O(\e).
 \end{gather}
where $\tilde{\b{n}}$ is the outer normal of $\Gamma^{\e}$. $d\sigma$ is the surface element while $dV$ is the volume element. In the limit $\e\to 0$, on the inner side $\tilde{\b{n}}=\b{n}$ and on the outer side $\tilde{\b{n}}=-\b{n}$. It follows that
$$
\int_{\Gamma}\zeta[\partial_k\b{u}]\cdot\b{n} d\sigma=0.
$$
Since $\zeta$ is arbitrary, we must have $[\partial_k\b{u}]\cdot\b{n}=0$.
\end{proof}

\begin{xrem}
In general, $[\b{u}_t]=-\b{u}\cdot[\nabla\b{u}]\neq 0$. However, the interface is moving and $[\b{u}_t]\neq 0$ is true only at one instantaneous time. The velocity $\b{u}$ is still expected to be continuous.
\end{xrem}

\begin{lem}
Consider $\nabla\b{u}=\nabla\b{u}(x)$, $x\notin\Gamma$. In the 3D case, for any $\b{v}$ and $\b{w}$, we have
\begin{gather}
(\b{v}\cdot\nabla\b{u})\times\b{w}+\b{v}\times(\b{w}\cdot\nabla\b{u})=-\nabla\b{u}\cdot(\b{v}\times\b{w}).
\end{gather}
In the 2D case, letting $\hat{z}$ be perpendicular to the fluid plane,   for any $\b{v}$, it holds that 
\begin{gather}
\hat{z}\times(\b{v}\cdot\nabla\b{u})=-\nabla\b{u}\cdot(\hat{z}\times\b{v}).
\end{gather} 
\end{lem}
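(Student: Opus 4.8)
The plan is to recast both identities in terms of the matrix $M=\nabla\b{u}$ with entries $M_{ij}=\partial_i u_j$, and to exploit a purely algebraic cross-product identity together with the incompressibility constraint $\nabla\cdot\b{u}=0$, which holds on each side of $\Gamma$ since $\b{u}$ is smooth away from the interface. Under the conventions of the paper, $\b{v}\cdot\nabla\b{u}=M^{T}\b{v}$ and $\nabla\b{u}\cdot(\b{v}\times\b{w})=M(\b{v}\times\b{w})$, so the claimed 3D identity is equivalent to
\begin{gather}
(M^{T}\b{v})\times\b{w}+\b{v}\times(M^{T}\b{w})=-M(\b{v}\times\b{w}).
\end{gather}

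First I would establish the following general identity, valid for an arbitrary $3\times3$ matrix $A$ and arbitrary vectors $\b{a},\b{b}$:
\begin{gather}
(A\b{a})\times\b{b}+\b{a}\times(A\b{b})=(\mathrm{tr}\,A)(\b{a}\times\b{b})-A^{T}(\b{a}\times\b{b}).
\end{gather}
By bilinearity in $(\b{a},\b{b})$ and linearity in $A$, it suffices to verify this on rank-one matrices $A=\b{e}_{m}\b{e}_{n}^{T}$ with $\b{a},\b{b}$ ranging over the standard basis, which reduces to a finite check; alternatively one can carry it out directly in components using $[\b{x}\times\b{y}]_{k}=\epsilon_{kpq}x_{p}y_{q}$ and the contraction identity for products of Levi-Civita symbols.

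The conclusion then follows by specialising $A=M^{T}$, so that $A^{T}=M=\nabla\b{u}$ and $\mathrm{tr}\,A=\mathrm{tr}\,M=\partial_i u_i=\nabla\cdot\b{u}$. The crucial point is that this trace is exactly the divergence, which vanishes off the interface; the $(\mathrm{tr}\,A)(\b{a}\times\b{b})$ term therefore drops out and leaves precisely the desired $-M(\b{v}\times\b{w})$. For the 2D statement I would proceed analogously, interpreting $\hat{z}\times$ as the in-plane quarter rotation $(a,b)\mapsto(-b,a)$ and computing both sides of $\hat{z}\times(\b{v}\cdot\nabla\b{u})=-\nabla\b{u}\cdot(\hat{z}\times\b{v})$ componentwise; matching the two expressions again forces the relation $\partial_1 u_1+\partial_2 u_2=0$, so the two-dimensional identity is once more a consequence of incompressibility rather than a generic vector-calculus fact.

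The main obstacle I anticipate is not any single computation but rather the bookkeeping in the general matrix identity: keeping the transpose and the trace term correct is delicate, since the identity is false for a generic $A$, and it is precisely the trace term that the divergence-free condition annihilates. Emphasising that $\nabla\cdot\b{u}=0$ is essential, and is the reason these identities hold only for the incompressible velocity field, is the conceptual heart of the proof.
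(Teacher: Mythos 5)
Your proposal is correct, but it takes a genuinely different route from the paper's proof. The paper splits $\nabla\boldsymbol{u}$ into symmetric and antisymmetric parts $E$ and $A$: the symmetric part is handled by the spectral decomposition $E=\sum_i\lambda_i\boldsymbol{e}_i\boldsymbol{e}_i$ together with $\sum_i\lambda_i=0$ (incompressibility) and a Levi-Civita manipulation, while the antisymmetric part is handled via its axial vector $\Omega$ and the double-cross-product identity; the 2D statement is then read off as a special case of the 3D one. You instead prove a single universal identity for an \emph{arbitrary} $3\times 3$ matrix,
\begin{equation*}
(A\boldsymbol{a})\times\boldsymbol{b}+\boldsymbol{a}\times(A\boldsymbol{b})=(\mathrm{tr}\,A)\,(\boldsymbol{a}\times\boldsymbol{b})-A^{T}(\boldsymbol{a}\times\boldsymbol{b}),
\end{equation*}
which is the polarization (linearization at the identity) of the cofactor identity $(A\boldsymbol{a})\times(A\boldsymbol{b})=\mathrm{cof}(A)\,(\boldsymbol{a}\times\boldsymbol{b})$, and then specialize $A=(\nabla\boldsymbol{u})^{T}$, so that the trace term is exactly $\nabla\cdot\boldsymbol{u}=0$. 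Your translation into matrix form is consistent with the paper's index conventions ($\boldsymbol{v}\cdot\nabla\boldsymbol{u}=M^{T}\boldsymbol{v}$ and $\nabla\boldsymbol{u}\cdot\boldsymbol{a}=M\boldsymbol{a}$ for $M_{ij}=\partial_i u_j$), the matrix identity itself is true (the rank-one/basis reduction or the $\epsilon$-$\delta$ contraction both verify it), and the 2D argument via the quarter rotation $J$, using $JM^{T}+MJ=(\mathrm{tr}\,M)\,J$, is also sound. What your route buys: no case split, no eigendecomposition, no axial vector, and a clean isolation of exactly where incompressibility enters --- the identity fails for compressible fields precisely by the term $(\nabla\cdot\boldsymbol{u})(\boldsymbol{v}\times\boldsymbol{w})$, which is a sharper statement than the paper makes. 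What the paper's route buys: the symmetric/antisymmetric split mirrors the physical decomposition into strain rate and vorticity and uses only familiar vector identities, with incompressibility entering through the eigenvalue sum. One small remark: your 2D case could alternatively be obtained, as in the paper, by taking $\boldsymbol{w}=\hat{z}$ in the 3D identity, since $\hat{z}\cdot\nabla\boldsymbol{u}=0$ for a planar field.
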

\begin{proof}
This proof follows closely from \cite{cm04}. Clearly, we only need to show the first result since the second one is a special case of the first one. Let $$
E=\frac{1}{2}(\nabla\b{u}+\nabla\b{u}^T), \ \ \ \ A=\frac{1}{2}(\nabla\b{u}-\nabla\b{u}^T).
$$
$E$ is symmetric and hence $E=\sum_i\lambda_i \b{e}_i\b{e}_i$ where $\{\b{e}_i\}$ forms an orthonormal basis. The incompressibility condition ensures $\sum_i\lambda_i=0$.
\begin{gather*}
(\b{v}\cdot E)\times\b{w}+\b{v}\times(\b{w}\cdot E)
=\sum_i\lambda_i v_i\b{e}_i\times\b{w}+\sum_i\lambda_i w_i\b{v}\times\b{e}_i\\
=\sum_{i,m,n}(\lambda_i+\lambda_m)\e_{imn}v_iw_m\b{e}_n
=-\sum_{imn}\lambda_n\e_{imn}v_iw_n\b{e}_n=-E\cdot(\b{v}\times\b{w}).
\end{gather*}
$\e_{imn}$ is nonzero only if $i,m,n$ are not equal and thus $\e_{imn}(\lambda_i+\lambda_m)=-\e_{imn}\lambda_n$.

For the antisymmetric part, we can find $\Omega$ such that $A\cdot\b{q}=\Omega\times\b{q}$.
\begin{multline*}
(\b{v}\cdot A)\times\b{w}+\b{v}\times(\b{w}\cdot A)=-(A\cdot\b{v})\times\b{w}-\b{v}\times(A\cdot\b{w})\\
=-(\Omega\times\b{v})\times\b{w}-\b{v}\times(\b{\Omega}\times\b{w})
=-(\Omega\cdot\b{w})\b{v}+(\b{v}\cdot\b{\Omega})\b{w}\\
=-\Omega\times(\b{v}\times\b{w})=-A\cdot(\b{v}\times\b{w}).
\end{multline*}
This then ends the proof.
\end{proof}

Let $D/Dt=\partial_t+\b{u}\cdot\nabla$ denote the material derivative. The evolution equation of the tangential stretching is given by the following theorem:
\begin{thm}
Let $\chi=|\b{X}_{\xi}|$ (recall in 3D case, this means $|\b{X}_{\xi_1}\times\b{X}_{\xi_2}|$). It satisfies the following equation:
\begin{gather}
\frac{D}{Dt}\chi(\xi, t)=-(\b{n}\cdot\nabla\b{u}\cdot\b{n})\chi(\xi, t)=\big((I-\b{n}\b{n}):\nabla\b{u}\big)\chi(\xi, t),
\end{gather}
where $\b{n}\cdot\nabla\b{u}\cdot\b{n}$ is well-defined at the interface. 
\end{thm}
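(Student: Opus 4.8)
The plan is to exploit that $\xi$ is a Lagrangian (material) label, so holding $\xi$ fixed and differentiating in $t$ is exactly the material derivative; thus $\frac{D}{Dt}\chi=\partial_t\chi$ at fixed $\xi$. To avoid the square root in $\chi=|\b{X}_\xi|$ I would differentiate $\chi^2$ instead. I treat the 3D case as primary (the 2D identity in the preceding Lemma being a special case) and write $\b{a}=\b{X}_{\xi_1}$, $\b{b}=\b{X}_{\xi_2}$, so that $\chi^2=|\b{a}\times\b{b}|^2=(\b{a}\times\b{b})\cdot(\b{a}\times\b{b})$.

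The core computation is as follows. Differentiating $\chi^2$ in $t$ at fixed $\xi$ gives $2\chi\,\frac{D\chi}{Dt}=2(\b{a}\times\b{b})\cdot\frac{D}{Dt}(\b{a}\times\b{b})$, and by the product rule $\frac{D}{Dt}(\b{a}\times\b{b})=\b{a}_t\times\b{b}+\b{a}\times\b{b}_t$. Here I would commute the $\xi_i$- and $t$-derivatives (equality of mixed partials, justified by smoothness of the material trajectory) and use $\b{X}_t=\b{u}(\b{X},t)$ with the chain rule to get $\b{a}_t=\partial_{\xi_1}\b{u}=\b{a}\cdot\nabla\b{u}$ and similarly $\b{b}_t=\b{b}\cdot\nabla\b{u}$, in the notation of the excerpt; note these are tangential derivatives of $\b{u}$, hence continuous across $\Gamma$ despite the jump in $\nabla\b{u}$. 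This turns the two terms into exactly the left-hand side of the preceding Lemma, so applying that identity with $\b{v}=\b{a}$, $\b{w}=\b{b}$ collapses them into $-\nabla\b{u}\cdot(\b{a}\times\b{b})$.

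It then remains to read off the normal direction. Since $\b{a}$ and $\b{b}$ span the tangent plane of $\Gamma$, the vector $\b{a}\times\b{b}$ is parallel to the normal, i.e. $\b{a}\times\b{b}=\pm\chi\b{n}$. Substituting yields $2\chi\,\frac{D\chi}{Dt}=-2(\b{a}\times\b{b})\cdot\nabla\b{u}\cdot(\b{a}\times\b{b})=-2\chi^2(\b{n}\cdot\nabla\b{u}\cdot\b{n})$, the sign ambiguity being harmless because the expression is quadratic in $\b{n}$. Dividing by $\chi$ gives the first equality $\frac{D\chi}{Dt}=-(\b{n}\cdot\nabla\b{u}\cdot\b{n})\chi$. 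For the second equality I would use incompressibility in the form $\nabla\cdot\b{u}=I:\nabla\b{u}=0$, so that $(I-\b{n}\b{n}):\nabla\b{u}=-\b{n}\b{n}:\nabla\b{u}=-\b{n}\cdot\nabla\b{u}\cdot\b{n}$. Finally, to justify that $\b{n}\cdot\nabla\b{u}\cdot\b{n}$ is well-defined on $\Gamma$ even though $\nabla\b{u}$ has only one-sided limits there, I would invoke the jump relation $[\nabla\b{u}]\cdot\b{n}=0$ from the second Lemma: contracting once more with $\b{n}$ gives $[\b{n}\cdot\nabla\b{u}\cdot\b{n}]=0$, so the two one-sided limits agree.

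The 2D case is obtained the same way with $\chi^2=\b{X}_\xi\cdot\b{X}_\xi$: differentiating gives $\chi\,\frac{D\chi}{Dt}=\b{X}_\xi\cdot(\b{X}_\xi\cdot\nabla\b{u})=\chi^2(\b{t}\cdot\nabla\b{u}\cdot\b{t})$ with unit tangent $\b{t}$, and incompressibility $\b{t}\cdot\nabla\b{u}\cdot\b{t}+\b{n}\cdot\nabla\b{u}\cdot\b{n}=0$ (the trace of $\nabla\b{u}$ over the orthonormal frame $\{\b{t},\b{n}\}$) converts the tangential term into the normal one. I expect the only real subtlety, and the main place to be careful, to be the index bookkeeping under the convention $(\nabla\b{u})_{ij}=\partial_i u_j$ when applying the Lemma and when passing between $\b{n}\cdot\nabla\b{u}\cdot\b{n}$, $\b{n}\b{n}:\nabla\b{u}$, and the chain-rule expression $\b{X}_{\xi_i}\cdot\nabla\b{u}$; the geometric and analytic content is otherwise routine.
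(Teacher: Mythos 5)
Your algebraic skeleton is sound and, at bottom, follows the same route as the paper's proof: both hinge on the preceding cross-product lemma applied with $\b{v}=\b{X}_{\xi_1}$, $\b{w}=\b{X}_{\xi_2}$, both read off the answer from the fact that $\b{X}_{\xi_1}\times\b{X}_{\xi_2}$ is parallel to $\b{n}$ together with $[\nabla\b{u}]\cdot\b{n}=0$, and both get the second equality from $I:\nabla\b{u}=\nabla\cdot\b{u}=0$. Your cosmetic variations are fine and in places cleaner: differentiating $\chi^2$ avoids fussing with the norm, and your 2D argument ($\b{t}\cdot\nabla\b{u}\cdot\b{t}=-\b{n}\cdot\nabla\b{u}\cdot\b{n}$ from the trace) bypasses the lemma entirely, whereas the paper does 2D as the primary case via $\hat{z}\times\b{X}_\xi$.

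The one genuine gap is the step you dispatch with ``the chain rule'': $\partial_{\xi_1}\b{u}(\b{X}(\xi,t),t)=\b{X}_{\xi_1}\cdot\nabla\b{u}$. The classical chain rule requires $\b{u}$ to be differentiable at the point $\b{X}(\xi,t)$, and it is not: that point lies on $\Gamma$, where $\nabla\b{u}$ has only one-sided limits. Your parenthetical remark --- that $\b{X}_{\xi_1}\cdot\nabla\b{u}$ is a tangential derivative whose two one-sided values agree --- shows the \emph{candidate} derivative is unambiguous, but it does not show that the derivative of the composition $\xi\mapsto\b{u}(\b{X}(\xi,t),t)$ exists, which is the actual issue. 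The paper treats precisely this point as the crux of the proof: it states explicitly that the chain rule cannot be applied directly, and instead forms divided differences at points pushed off the interface by $\pm h^2\b{n}$, where $\b{u}$ is genuinely smooth, applies the cross-product lemma there, and uses the Lipschitz bound on $\b{u}$ to conclude that the on-interface difference quotient converges to the common one-sided value. Your argument can be repaired with a short patch in the same spirit: since $\b{u}$ is smooth up to $\Gamma$ from the $+$ side with $\nabla\b{u}$ admitting a boundary limit, it has a $C^1$ extension $\b{u}^+$ across $\Gamma$ agreeing with $\b{u}$ on $\Gamma$; because $\b{X}(\cdot,t)$ stays on $\Gamma$, the composition equals $\b{u}^+(\b{X}(\xi,t),t)$, to which the chain rule does apply, giving derivative $\b{X}_{\xi_1}\cdot(\nabla\b{u})^+$; repeating from the $-$ side and invoking your tangential-continuity observation identifies the two answers. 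Without some such device, the proof is incomplete at exactly the point where the theorem is delicate.
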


\begin{proof}
In the 2D case, $\xi\in\mathbb{R}$. Let $\hat{z}$ be a unit vector perpendicular with the fluid plane. $\chi=|\hat{z}\times\b{X}_{\xi}|$.
We have
\begin{gather*}
\frac{D}{Dt}\chi=\b{n}\cdot\Big(\hat{z}\times\frac{\partial}{\partial\xi}\b{u}(X(\xi,t),t)\Big).
\end{gather*}
Since $\nabla\b{u}$ is not continuous across the interface, we can not take the derivative using chain rule directly. Instead, we consider
the divided difference
\begin{gather*}
D_h^{\pm}\b{u}=\frac{\b{u}(X(\xi+h)\pm h^2 \b{n}(\xi+h))-\b{u}(X(\xi)\pm h^2\b{n}(\xi))}{h},
\end{gather*}
where $t$ dependence is ignored for convenience. 
Then, since $\b{u}$ is Lipschitz, as $h\to0$,
\begin{multline*}
\hat{z}\times D_h^{\pm}\b{u}=\hat{z}\times((\b{X}_{\xi}\pm h \b{n}_{\xi})\cdot\nabla \b{u}(X(\xi)\pm h^2 \b{n}(\xi)))+O(h)\\
=-\nabla\b{u}(X(\xi)\pm h^2 n(\xi))\cdot(\hat{z}\times(\b{X}_{\xi}\pm h^2 n_{\xi}))+O(h)\to -\nabla\b{u}(X)^{\pm}\cdot(\hat{z}\times\b{X}_{\xi}).
\end{multline*}
The two limits are equal to $-(\nabla\b{u}\cdot\b{n})\chi$ since $\hat{z}\times\b{X}_{\xi}$ is parallel with $\b{n}$ and $[\nabla\b{u}]\cdot\b{n}=0$.

On the other hand,
\begin{gather*}
D_h\b{u}=\frac{\b{u}(X(\xi+h))-\b{u}(\xi)}{h}+O(h),
\end{gather*}
since $\b{u}$ is Lipschitz continuous. This then means $\b{u}(\b{X}(\xi))$ is differentiable with respect to $\xi$ and $$
\hat{z}\times\partial_{\xi}\b{u}(X)=-\nabla\b{u}\cdot(\hat{z}\times\b{X}_{\xi})=-(\nabla\b{u}\cdot\b{n})\chi.
$$
Hence,
\begin{gather*}
\frac{D}{Dt}\chi=-\b{n}\cdot\nabla\b{u}\cdot(\hat{z}\times\b{X}_{\xi})=-(\b{n}\cdot\nabla\b{u}\cdot\b{n})\chi.
\end{gather*}
For the 3D case, the proof is similar. We use $\chi=|\b{X}_{\xi_1}\times\b{X}_{\xi_2}|$ and note $\b{X}_{\xi_1}\times\b{X}_{\xi_2}$ is parallel to $\b{n}$. The last inequality in the theorem follows from the fact that $I:\nabla\b{u}=\nabla\cdot\b{u}=0$.
\end{proof}

\section{An Eulerian formulation} \label{sec:eulerian}
Suppose that the membrane is described by the level set function $\phi$ which satisfies \eqref{eq:levelset} and that $\phi>0$ inside $\Gamma$. We now derive a closed system for simulating the moving interface with a fully Eulerian fashion.

\subsection{The elastic forces}
Extend the stretch function $\chi$ to a neighborhood of the interface  (we provide one possible extension later). Let $\b{n}=\nabla\phi/|\nabla\phi|$, which is a natural extension of the normal vector of the interface. We now derive the forces in terms of $\phi$ and $\chi$. 

Consider an energy that can be decomposed into two parts $\mathscr{E}=\mathscr{E}_s+\mathscr{E}_b$. $\mathscr{E}_s$ only depends on the stretching and $\mathscr{E}_b$ only depends on the curvature (see \cite{mmpr12}). 

The stretching part can be written as $\mathscr{E}_s=\int_{\Gamma} E_s(|\b{X}_{\xi}|)d\xi$ (see \cite{cm04}). In the 3D fluid domain case, $d\xi$ is the area element in the $\xi_1$-$\xi_2$ plane. 
Let $\delta(u)$ be the 1D Dirac delta function in variable $u$. Using the fact that $\delta(\phi)|\nabla\phi|$ is the delta function for variable $x$, the substitution $d\xi\to \delta(\phi)|\nabla\phi|dV/\chi$ could be applied. (Note that even though $\nabla\b{u}$ may be discontinuous across the interface, $\phi$ can be shown be to $C^1$ by investigating the equation for $\nabla\phi$ and $|\nabla\phi|$ at the interface is well defined.) This energy can be written as
\begin{gather}\label{eq:elasenergy}
\mathscr{E}_s=\int_{\Omega} \frac{1}{\chi}E_s(\chi)|\nabla\phi|\delta(\phi)dV
\end{gather}
Recall that $\Omega$ is the fluid domain.

The force is determined through
\begin{gather}\label{eq:elasf}
\frac{d}{dt}\mathscr{E}_s=-\int_{\Omega}\b{f}_s\cdot\b{u}dV.
\end{gather}
There is a total gradient that can't be determined since $\nabla\cdot\b{u}=0$, but it can be absorbed into the pressure. After a tedious derivation, we obtain:
\begin{gather}
\b{f}_s=\nabla E_s'(\chi)\cdot(I-\b{n}\b{n})|\nabla\phi|\delta(\phi)+E_s'(\chi)\kappa \nabla\phi \delta(\phi),
\end{gather}
where 
\begin{gather}
\kappa=-\nabla\cdot(\nabla\phi/|\nabla\phi|)=-\nabla\cdot\b{n}
\end{gather} is the curvature chosen so that the curvature of a convex surface is positive. $I$ is the identity tensor and $\b{n}\b{n}$ is the tensor product between $\b{n}$ and itself. We provide the derivation in Appendix \ref{app:stretchf}.
\begin{xrem}
Note that $\nabla\cdot\b{n}$ is well defined at the interface. Since $\b{n}$ is a unit vector field, $((\nabla\b{n})\cdot\b{n})^{\pm}=0$. Then, the $\b{n}\b{n}$ component of $\nabla\b{n}$ is zero. $\nabla\cdot\b{n}$ is determined by $\b{e}_i\b{e}_i$ components only where $\b{e}_i$ is a tangent vector. These components are only given by the values of $\b{n}$ on $\Gamma$.  
\end{xrem}

The energy that depends on the curvature is given by $\mathscr{E}_{b}=\int_{\Gamma(t)}E_b(\kappa)d\sigma$.  For convenience, we have assumed that  $E_b$ just depends on the curvature. Strictly speaking, after the elastic structure has been stretched, the bending stiffness may be changed and $E_{b}$ may not just depend on $\kappa$.  In the 2D fluid domain case $d\sigma$ is the arclength element and in the 3D fluid domain case, $d\sigma$ is the surface area element on $\Gamma$.
 
Since the Jacobian $d\sigma/d\xi=|X_{\xi}|=\chi$, we therefore could do the substitution $d\sigma\to \delta(\phi)|\nabla\phi|dV$ and find
\begin{gather}
\mathscr{E}_{b}=\int_{\Omega} E_b(\kappa)|\nabla\phi|\delta(\phi)dV,
\end{gather}
The principle of virtual work then gives (see \cite{mmpr12})
\begin{gather}
\b{f}_b=\nabla\cdot\left(-E_b(\kappa)\b{n}
-\frac{(I-\b{n}\b{n})}{|\nabla\phi|}
\cdot\nabla(E_b'(\kappa)|\nabla\phi|)\right)\nabla\phi \delta(\phi).
\end{gather}
Note that in \cite{mmpr12}, the normal is pointing outward and the curvature is $\nabla\cdot\b{n}$. The second term above is different from theirs with a negative sign. This force can be simplified to: 
\begin{gather}
\b{f}_b=\Big(E_b\kappa-E_b'\nabla\b{n}:(\nabla\b{n})^T-\Delta_{tan}E_b'(\kappa)\Big)\nabla\phi \delta(\phi)
\end{gather}
where 
\begin{gather}
\Delta_{tan}=((I-\b{n}\b{n})\cdot\nabla)\cdot ((I-\b{n}\b{n})\cdot\nabla)
\end{gather}
$\nabla\b{n}:(\nabla\b{n})^T=((I-\b{n}\b{n})\cdot\nabla\b{n}):((I-\b{n}\b{n})\cdot\nabla\b{n})^T$. $\b{f}_b$ is given by the shape of the interface only. We provide the computation for showing these two $\b{f}_b$ expressions are equivalent in Appendix \ref{app:curvf}.

The body force is then $\b{f}=\b{f}_s+\b{f}_b$. $\b{f}_b$ has only normal component while $\b{f}_s$ is related to the tension, and it has both tangential and normal components. The tangential component is due to the varying tension along the interface, which can also be significant in applications. 

 According to the forces given, only the zero level set $\Gamma$ and the values of $\chi$ on $\Gamma$ matter, which are determined by $\b{u}$ at the interface totally. Hence, to get the correct forces, we don't have to use the physical velocity for $\phi$ and $\chi$. We can use two arbitrary velocity fields $\b{v}$ and $\b{w}$ such that they are equal to the physical velocity field $\b{u}$ on $\Gamma$, but can result smooth $\b{f}_1$, $f_2$ and $\phi$ near $\Gamma$. One example is $\b{v}=\b{w}=T\b{u}$ with $T\b{u}$ being obtained by extending $\b{u}$ along $\nabla\phi$. Hence, we find an Eulerian formulation as
\begin{gather}\label{eq:ns}
\phi_t+\b{v}\cdot\nabla\phi=0,\\
\chi_t+\b{w}\cdot\nabla\chi=\Big((I-\b{n}\b{n}):\nabla\b{w}\Big)\chi,\ \ \  \b{n}=\nabla\phi/|\nabla\phi|, \\
\b{f}=\b{f}_1(x)|\nabla\phi|\delta(\phi)
+f_2(x)\nabla\phi\delta(\phi),\\
\b{f}_1=(I-\b{n}\b{n})\cdot\nabla E_s'(\chi),\\
f_2=\kappa E_s'(\chi)+E_b\kappa-E_b'\nabla\b{n}:(\nabla\b{n})^T-\Delta_{tan}E_b'(\kappa).
\end{gather}
This formulation can be simulated by the immersed boundary method, but the accuracy is not good as the discrete delta functions must be used. To achieve a better accuracy, one may use the immersed interface method and the singular forces are built into the jump conditions of the physical quantities so that the delta functions are avoided totally. 
\begin{xrem}
It is not hard to show that $\phi$ is $C^1$ and $\chi$ is continuous with $\b{v}=\b{w}=\b{u}$. Whether or not $\b{u}$ yields higher regularity remains an interesting question, which we won't study here. 
\end{xrem}

\section{Jump conditions}\label{sec:jump}
As mentioned before, in the immersed interface method setting, we build the effect of the immersed interface into jump conditions of the physical quantities to avoid using delta functions. For the jump conditions under Lagrangian fashion, one can refer to \cite{laili01, xw06}. The derived jump conditions are complicated for implementation. Due to the complexity, in \cite{ll03}, the authors proposed to deal with the tangential forces using the smoothened delta functions as in the immersed boundary method but build the normal component into jump conditions as in the immersed interface method. When the tangential forces are significant, this treatment is not excellent any more. We now derive the jump conditions under the Eulerian fashion. We will see that the tangential components can also conveniently be built into the jump conditions and the obtained expressions are clean and compact. 

We first of all derive the spatial jumps. The temporal jumps are then derived from the spatial jumps directly. Recall that the spatial jump is defined by Equation \eqref{eq:spatialjump}. Note that the derivation in this section actually applies for any forces $\b{f}_1$ and $f_2$, not just the elastic forces given in the previous section.
\subsection{Spatial jump conditions} 
\begin{thm}\label{thm:p}
With the setting given in Section \ref{sec:eulerian}, the jump conditions of $p$ and $\b{n}\cdot\nabla\b{u}$ are given by:
\begin{gather}
[p]=f_2,\\
\frac{1}{Re}\b{n}\cdot[\nabla\b{u}]=-\b{f}_1.
\end{gather}
\end{thm}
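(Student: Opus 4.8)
The plan is to read the momentum equation \eqref{eq:momscaled} as an equality of distributions across $\Gamma$ and to match the parts supported on the interface; away from $\Gamma$ the regular parts merely reproduce Navier--Stokes on each side, so only the interfacial (surface-delta) parts carry information. First I would argue that $\b{u}_t$ and $\b{u}\cdot\nabla\b{u}$ contribute nothing singular: by Lemma~\ref{ass:u} the field $\b{u}$ is Lipschitz and $\nabla\b{u}$ has one-sided limits, and by the remark after it $\b{u}_t$ likewise stays bounded (its jump is $-\b{u}\cdot[\nabla\b{u}]$), so both terms are bounded across $\Gamma$. Writing $\nabla\phi\,\delta(\phi)=\b{n}\,|\nabla\phi|\delta(\phi)$, the forcing in \eqref{eq:ns} reads $\b{f}=(\b{f}_1+f_2\b{n})|\nabla\phi|\delta(\phi)$, a single layer with interfacial density $\b{f}_1+f_2\b{n}$. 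Hence the only remaining terms capable of balancing it are the singular parts of $\nabla p$ and $\tfrac{1}{Re}\Delta\b{u}$.

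To extract these I would integrate the equation over the thin shell $\Gamma^{\e}$ against an arbitrary localizing test function and let $\e\to0$, exactly as in the proof that $[\nabla\b{u}]\cdot\b{n}=0$. Since $p$ is smooth on each side, the Divergence Theorem gives $\int_{\Gamma^{\e}}\nabla p\,dV\to\int_{\Gamma}[p]\,\b{n}\,d\sigma$, the contributions on $\Gamma^{+}$ and $\Gamma^{-}$ combining through $\tilde{\b{n}}=\pm\b{n}$. Likewise, writing $\Delta\b{u}=\nabla\cdot(\nabla\b{u})$ and using that $\b{u}$ is continuous with $\nabla\b{u}$ jumping only at $\Gamma$, the same computation yields $\int_{\Gamma^{\e}}\Delta\b{u}\,dV\to\int_{\Gamma}\b{n}\cdot[\nabla\b{u}]\,d\sigma$, while the bounded terms contribute $O(\e)$. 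Because the localizing function is arbitrary, the interfacial densities must agree pointwise:
\begin{gather*}
[p]\,\b{n}=\frac{1}{Re}\,\b{n}\cdot[\nabla\b{u}]+\b{f}_1+f_2\b{n}.
\end{gather*}

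It then remains to split this vector identity into normal and tangential parts. The force $\b{f}_1=(I-\b{n}\b{n})\cdot\nabla E_s'(\chi)$ is tangential, so $\b{n}\cdot\b{f}_1=0$. For the viscous term I would invoke the already-established jump condition $[\nabla\b{u}]\cdot\b{n}=0$: dotting it with $\b{n}$ gives $\b{n}\cdot[\nabla\b{u}]\cdot\b{n}=0$, so $\b{n}\cdot[\nabla\b{u}]$ has no normal component and is purely tangential. Projecting the identity onto $\b{n}$ then annihilates both $\b{f}_1$ and the viscous term and leaves $[p]=f_2$; projecting onto the tangent plane (applying $I-\b{n}\b{n}$) annihilates $[p]\b{n}$ and $f_2\b{n}$ and leaves $\tfrac{1}{Re}\,\b{n}\cdot[\nabla\b{u}]=-\b{f}_1$, which are precisely the two claimed conditions.

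The main obstacle is the rigorous justification of the two distributional identities, and in particular verifying that the continuity $[\b{u}]=0$ rules out any dipole (derivative-of-delta) term in $\Delta\b{u}$, so that only the single layer $\b{n}\cdot[\nabla\b{u}]$ survives; one must also keep the orientation of $\b{n}$ (inward, pointing from the $-$ side to the $+$ side) consistent throughout the Divergence-Theorem step so the signs in $[p]$ and $\b{n}\cdot[\nabla\b{u}]$ come out correctly. Once $\b{n}\cdot[\nabla\b{u}]$ is known to be tangential, the projection step is immediate.
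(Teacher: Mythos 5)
Your proposal is correct and follows essentially the same route as the paper: integrating the momentum equation against a test function over the shell $\Gamma^{\e}$, discarding the bounded inertial terms, converting the pressure and viscous terms to boundary integrals via the Divergence Theorem, identifying the force as a single layer with density $\b{f}_1+f_2\b{n}$, and then splitting the resulting identity $[p]\b{n}=\frac{1}{Re}\b{n}\cdot[\nabla\b{u}]+\b{f}_1+f_2\b{n}$ into normal and tangential parts using $[\nabla\b{u}]\cdot\b{n}=0$ and the tangentiality of $\b{f}_1$. The only cosmetic differences are that the paper handles the inertial terms via the transport-theorem identity $\int_{\Gamma^{\e}}\zeta(\b{u}_t+\b{u}\cdot\nabla\b{u})\,dV=\frac{d}{dt}\int_{\Gamma^{\e}}\zeta\b{u}\,dV=O(\e)$ and converts the force term using the coarea-type identity \eqref{eq:integrallevel}, both of which your boundedness and single-layer arguments replace harmlessly.
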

\begin{proof}
Multiply a test function $\zeta(x)$ in the Navier-Stokes equations \eqref{eq:momscaled} and integrate over $\Gamma^{\e}$. The inertial terms become $$
\int_{\Gamma^{\e}(t)}\zeta(\b{u}_t+\b{u}\cdot\nabla\b{u})dV=\frac{d}{dt}\int_{\Gamma^{\e}(t)}\zeta \b{u}dV=O(\e).
$$
As $\e\to 0$, since $p$ is bounded, the pressure term has the following limit:
\begin{gather*}
\int_{\Gamma^{\e}(t)}\zeta\nabla p dV=\int_{\partial\Gamma^{\e}}\tilde{\b{n}}\zeta p d\sigma-\int_{\Gamma^{\e}}p\nabla\zeta dV
=\int_{\Gamma^+}\b{n}\zeta p d\sigma-\int_{\Gamma^-}\b{n}\zeta p d\sigma+O(\e)\to \int_{\Gamma}\b{n}\zeta[p]d\sigma.
\end{gather*}
Similarly, $\nabla\b{u}$ is bounded and the diffusion term has the limit:
\begin{gather*}
\frac{1}{Re}\int_{\Gamma^{\e}}\zeta\Delta\b{u}dV\to \frac{1}{Re}\int_{\Gamma}\zeta\b{n}\cdot[\nabla\b{u}] d\sigma.
\end{gather*}

The following identity, as given in \cite{cm06},
\begin{gather}\label{eq:integrallevel}
\int_{|\phi|<\eta}g(x)dV=\int_{-\eta}^{\eta}\int_{\phi=\nu}g(x)|\nabla\phi|^{-1}d\sigma d\nu
\end{gather}
yields
\begin{gather*}
\int_{\Gamma^{\e}}\zeta\b{f}dV=\int_{\Gamma}\zeta[\b{f}_1+f_2(x)\b{n}]d\sigma.
\end{gather*}
Combining the results gives
\begin{gather*}
\b{n}[p]=\frac{1}{Re}\b{n}\cdot[\nabla\b{u}]+\b{f}_1+f_2\b{n}.
\end{gather*}
Since $[\nabla\b{u}]\cdot\b{n}=0$, $\b{n}\cdot[\nabla\b{u}]$ is then perpendicular with $\b{n}$. The results follow by comparing the components parallel and perpendicular with $\b{n}$.
\end{proof}

To derive $[\b{n}\cdot\nabla p]$, we'll basically follow \cite{laili01}, but in the current Eulerian setting, we need the following lemma about surface divergence:
\begin{lem}\label{lmm:surdiv}
Let $\b{v}$ be a smooth vector field defined in a neighborhood of the closed surface $\Gamma$. $\b{n}$ is some smooth extension of the normal vector such that $|\b{n}|=1$ in a neighborhood of $\Gamma$. Then, it holds that
\begin{gather}
\Big((I-\b{n}\b{n})\cdot\nabla\Big)\cdot\b{v}=\b{n}\cdot\nabla\times(\b{n}\times\b{v})+(\b{v}\cdot\b{n})(\nabla\cdot\b{n}),
\end{gather}
and it follows that
\begin{gather}
\int_{\Gamma}[(I-\b{n}\b{n})\cdot\nabla]\cdot\b{v}d\sigma
=\int_{\Gamma}(\b{v}\cdot\b{n})\nabla\cdot\b{n}d\sigma.
\end{gather}
\end{lem}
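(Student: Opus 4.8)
The plan is to establish the pointwise vector identity throughout the neighborhood of $\Gamma$ first, and only afterwards integrate it over the closed surface, where a Stokes-type argument annihilates the curl term. Because every quantity ($\b{v}$, the extended $\b{n}$, and hence $\b{n}\times\b{v}$) is smooth in a full neighborhood of $\Gamma$ rather than merely on $\Gamma$, I would treat both sides as ordinary smooth functions of $x$ and verify the identity by standard vector calculus, deferring all surface-geometric reasoning to the very last step.

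For the pointwise identity I would first rewrite the left-hand side in components. With $P_{ij}=\delta_{ij}-n_in_j$, the surface divergence is
$$[(I-\b{n}\b{n})\cdot\nabla]\cdot\b{v}=(\delta_{ij}-n_in_j)\partial_i v_j=\nabla\cdot\b{v}-n_in_j\,\partial_i v_j,$$
so the only non-trivial content sits in the term $n_in_j\,\partial_i v_j=\b{n}\cdot\big[(\b{n}\cdot\nabla)\b{v}\big]$. For the right-hand side I would invoke the standard expansion
$$\nabla\times(\b{n}\times\b{v})=\b{n}(\nabla\cdot\b{v})-\b{v}(\nabla\cdot\b{n})+(\b{v}\cdot\nabla)\b{n}-(\b{n}\cdot\nabla)\b{v},$$
dot it with $\b{n}$, and simplify using $|\b{n}|=1$. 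The key simplifications are $\b{n}\cdot(\b{v}\cdot\nabla)\b{n}=\frac{1}{2}(\b{v}\cdot\nabla)|\b{n}|^2=0$ and $\b{n}\cdot\b{n}=1$, which collapse $\b{n}\cdot\nabla\times(\b{n}\times\b{v})$ to $\nabla\cdot\b{v}-(\b{v}\cdot\b{n})(\nabla\cdot\b{n})-\b{n}\cdot\big[(\b{n}\cdot\nabla)\b{v}\big]$. Adding $(\b{v}\cdot\b{n})(\nabla\cdot\b{n})$ then reproduces exactly the component expression of the left-hand side, which proves the first identity. This step is essentially bookkeeping and I expect no difficulty.

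The integral identity follows by integrating the pointwise identity over $\Gamma$, so everything reduces to showing
$$\int_{\Gamma}\b{n}\cdot\nabla\times(\b{n}\times\b{v})\,d\sigma=0.$$
This is the integral of the normal component of an ordinary $3$D curl of the smooth field $\b{F}=\b{n}\times\b{v}$ over a closed surface, and I would argue it vanishes by Stokes' theorem: cut $\Gamma$ along a simple closed curve $C$ into two caps $\Gamma_1,\Gamma_2$ with $\partial\Gamma_1=C$ and $\partial\Gamma_2=-C$, apply the Kelvin--Stokes theorem to each cap, and observe that the two line integrals of $\b{F}$ along $C$ cancel because $C$ is traversed with opposite orientations. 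I expect this last step to be the main obstacle, precisely because $\Gamma$ is closed and $\b{F}$ is only defined in a neighborhood of $\Gamma$: one cannot simply apply the divergence theorem $\int_V\nabla\cdot(\nabla\times\b{F})\,dV=0$ to the enclosed solid $V$, since $\b{F}$ need not extend smoothly to all of $V$. The care therefore lies in staying on the surface and using that $\b{n}\cdot\nabla\times\b{F}$ restricted to $\Gamma$ is an intrinsic surface quantity, so that the surface Stokes theorem applies to the boundaryless $\Gamma$ and forces the integral to zero, leaving $\int_{\Gamma}[(I-\b{n}\b{n})\cdot\nabla]\cdot\b{v}\,d\sigma=\int_{\Gamma}(\b{v}\cdot\b{n})\nabla\cdot\b{n}\,d\sigma$ as claimed.
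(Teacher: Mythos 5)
Your proposal is correct and follows essentially the same route as the paper: the pointwise identity is verified by vector calculus using $|\b{n}|=1$ (your step $\b{n}\cdot(\b{v}\cdot\nabla)\b{n}=\tfrac{1}{2}(\b{v}\cdot\nabla)|\b{n}|^2=0$ is exactly the paper's fact $(\partial_i n_j)n_j=0$), and the integral identity then follows by integrating over $\Gamma$ and killing the curl term via Stokes' theorem. Your two-cap Kelvin--Stokes argument is simply a spelled-out version of the paper's one-line appeal to Stokes' theorem on the closed surface, and your caution about not invoking the divergence theorem on the enclosed solid is well placed since $\b{n}\times\b{v}$ is only defined near $\Gamma$.
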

\begin{proof} 
The first equation is just direct computation where $(\partial_in_j)n_j=0$ is used. Integrating the first identity over the closed surface $\Gamma$ then yields the integral identity by Stokes theorem. 
\end{proof}

Define the projection of the gradient operator:
\begin{gather}
\nabla_{\Gamma}=(I-\b{n}\b{n})\cdot\nabla.
\end{gather}
\begin{lem}\label{lmm:normalgradp}
With the setting in Section \ref{sec:eulerian}, the jump condition of $\b{n}\cdot[\nabla p]$ is given by:
\begin{gather}
[\frac{\partial p}{\partial n}]=\nabla_{\Gamma}\cdot\b{f}_1=((I-\b{n}\b{n})\cdot\nabla)\cdot\b{f}_1.
\end{gather}
\end{lem}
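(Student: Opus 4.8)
The plan is to derive the jump condition for $\partial p/\partial n$ by taking the divergence of the momentum equation and exploiting the spatial jumps already established in Theorem~\ref{thm:p}. Starting from the Navier--Stokes momentum equation, apply the divergence operator to both sides. Because $\nabla\cdot\b{u}=0$, the pressure contributes $\Delta p$, and the nonlinear and viscous terms simplify; in fact taking the divergence of $\b{u}_t+\b{u}\cdot\nabla\b{u}+\nabla p=\frac{1}{Re}\Delta\b{u}+\b{f}$ and using incompressibility gives a Poisson-type relation for $p$ sourced by the force $\b{f}$ and by $\nabla\b{u}:\nabla\b{u}^T$-type quadratic terms. The singular part of $\b{f}$ in the normal direction already produced $[p]=f_2$; the tangential part $\b{f}_1$ will now feed the jump in the normal derivative of $p$.

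**Concretely, first I would** set up the weak formulation exactly as in the proof of Theorem~\ref{thm:p}: multiply the divergence of the momentum equation by a smooth test function $\zeta$, integrate over the banded domain $\Gamma^{\e}$, and pass to the limit $\e\to0$ using the coarea identity~\eqref{eq:integrallevel}. The key observation is that the surface-supported force $\b{f}=\b{f}_1|\nabla\phi|\delta(\phi)+f_2\nabla\phi\delta(\phi)$, when hit with a divergence, produces both a genuinely singular (delta-prime) piece and a surface-divergence piece. To isolate the jump $[\partial p/\partial n]$ one integrates the term $\nabla\cdot\b{f}$ against $\zeta$ and transfers the derivative onto $\zeta$, so that the tangential force density $\b{f}_1$ gets differentiated along the surface. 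This is precisely where Lemma~\ref{lmm:surdiv} enters: the integral of $(I-\b{n}\b{n})\cdot\nabla$ applied to a vector field reduces, on a closed surface, to the normal-component/curvature term, which lets me convert the raw divergence of $\b{f}_1$ into the clean surface divergence $\nabla_{\Gamma}\cdot\b{f}_1$ and discard the boundary contributions.

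**The delicate bookkeeping** will be separating the contributions that survive in the $\e\to0$ limit from those that vanish. The term $\nabla\cdot(f_2\nabla\phi\,\delta(\phi))$ contains a $\delta'(\phi)$ piece that ostensibly blows up, but upon integrating against $\zeta$ and accounting for the already-known jump $[p]=f_2$ it should exactly cancel or recombine, leaving only the tangential derivative of $\b{f}_1$. I would track each piece by writing $\b{f}_1=(\b{f}_1\cdot\b{n})\b{n}+(I-\b{n}\b{n})\cdot\b{f}_1$ and noting from Theorem~\ref{thm:p} that the normal part of $\b{f}_1$ actually vanishes (since $\b{n}\cdot[\nabla\b{u}]\perp\b{n}$ forces $\b{f}_1$ tangential), so only the tangential part matters and $\nabla_{\Gamma}\cdot\b{f}_1$ is the natural object.

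**The main obstacle** I anticipate is handling the diffusion term's contribution to $[\partial p/\partial n]$ rigorously: naively $\Delta\b{u}$ could contribute a surface jump through $\b{n}\cdot[\nabla(\Delta\b{u})]$, and one must argue that, after using $[\nabla\b{u}]\cdot\b{n}=0$ and the continuity of $\b{u}$, the viscous term produces no net jump in the normal pressure derivative beyond what is already encoded. I expect this follows because differentiating the relation $[\nabla\b{u}]\cdot\b{n}=0$ tangentially keeps the viscous contribution tangential, so it drops out when I extract the normal component; but verifying this cleanly in the Eulerian $\delta$-function framework, rather than the Lagrangian one of~\cite{laili01}, is the step that requires the most care.
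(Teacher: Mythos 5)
Your proposal follows essentially the same route as the paper's proof: take the divergence of the momentum equation, test against $\zeta$ over $\Gamma^{\e}$, pass to the limit using \eqref{eq:integrallevel}, cancel the $\delta'$-type contribution of $f_2$ against the $-\int_{\Gamma}[p]\,\b{n}\cdot\nabla\zeta\,d\sigma$ term via $[p]=f_2$, and convert $-\int_{\Gamma}\b{f}_1\cdot\nabla\zeta\,d\sigma$ into $\int_{\Gamma}\zeta\,\nabla_{\Gamma}\cdot\b{f}_1\,d\sigma$ using Lemma \ref{lmm:surdiv} together with the tangency of $\b{f}_1$. The one place you overcomplicate matters is your anticipated ``main obstacle'': the viscous term requires no jump analysis at all, since $\nabla\cdot\Delta\b{u}=\Delta(\nabla\cdot\b{u})=0$ in the distribution sense, so it vanishes identically from the Poisson relation for $p$ before any interface bookkeeping begins.
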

\begin{xrem}
Note that $\nabla p$ has a delta-like singularity at the interface since $p$ is discontinuous across the interface. We can decompose $\nabla p$ into a singular distribution at the interface and a piecewise smooth vector field. $[\nabla p]$ is given by the jump of the piecewise smooth vector field and has nothing to do with the singular distribution.
\end{xrem}

\begin{proof}
Taking the divergence of the first equation in \eqref{eq:momscaled} yields,
$$
\nabla\cdot(\b{u}_t+\b{u}\cdot\nabla\b{u})+\Delta p=\nabla\cdot\b{f}.
$$
We multiply a test function $\zeta$ and integrate over $\Gamma^{\e}$. Integrating by parts and using the jump condition $[\b{u}_t]+\b{u}\cdot[\nabla\b{u}]=0$, we find the first term to satisfy $$
\int_{\Gamma^{\e}}\zeta\nabla\cdot(\b{u}_t+\b{u}\cdot\nabla\b{u})dV\to 0,\text{as}\ \e\to 0.
$$

The second term satisfies: 
\begin{multline*}
\int_{\Gamma^{\e}}\zeta\Delta p dV=\int_{\partial\Gamma^{\e}}\zeta\tilde{\b{n}}\cdot \nabla pd\sigma
-\int_{\partial\Gamma^{\e}}p\tilde{\b{n}}\cdot\nabla\zeta d\sigma+\int_{\Gamma^{\e}}p\Delta\zeta dV\\
\to
\int_{\Gamma}\zeta [\b{n}\cdot\nabla p]d\sigma-\int_{\Gamma}[p]\b{n}\cdot\nabla\zeta d\sigma.
\end{multline*}
The last term is 
\begin{gather*}
\int_{\Gamma^{\e}}\zeta\nabla\cdot\b{f}dV=-\int_{\Gamma^{\e}}\b{f}\cdot\nabla\zeta
=-\int_{\Gamma}(\b{f}_1+f_2\b{n})\cdot\nabla\zeta d\sigma,
\end{gather*}
by the formula \eqref{eq:integrallevel} again. 

Applying Lemma \ref{lmm:surdiv} on $\b{v}=\zeta P\b{f}_1=\zeta(I-\b{n}\b{n})\cdot\b{f}_1$, we have 
\begin{gather*}
-\int_{\Gamma}\b{f}_1\cdot\nabla\zeta d\sigma=-\int_{\Gamma}\b{f}_1\cdot(I-\b{n}\b{n})\cdot\nabla\zeta d\sigma=\int_{\Gamma}\zeta\nabla_{\Gamma}\cdot\b{f}_1 d\sigma.
\end{gather*}
The claim then follows by comparing the terms and the fact that $\zeta$ is arbitrary.
\end{proof}

The following lemma allows us to compute the tangential components of $[\nabla p]$ and $[\nabla\b{u}]$:
\begin{lem}\label{lmm:tangent}
Suppose $A$ is some quantity defined in a neighborhood of $\Gamma$, then
\begin{gather}
(I-\b{n}\b{n})\cdot[\nabla A]=\nabla_{\Gamma}[A]=(I-\b{n}\b{n})\cdot\nabla[A].
\end{gather}
\end{lem}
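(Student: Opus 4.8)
The plan is to notice that the second equality, $\nabla_{\Gamma}[A]=(I-\b{n}\b{n})\cdot\nabla[A]$, is simply the definition of $\nabla_{\Gamma}$ (introduced just before the lemma) applied to the scalar $[A]$; the only thing to observe is that $(I-\b{n}\b{n})\cdot\nabla[A]$ is well-defined, i.e. independent of how $[A]$ is extended off $\Gamma$, since the projector $I-\b{n}\b{n}$ annihilates the normal derivative and $\nabla_{\Gamma}$ sees only tangential derivatives. Hence all the content lies in the first equality $(I-\b{n}\b{n})\cdot[\nabla A]=\nabla_{\Gamma}[A]$, which I would prove by testing both sides against an arbitrary tangent vector and checking that the resulting scalars coincide.

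First I would fix $x\in\Gamma$ and let $\b{t}$ be any tangent vector at $x$, so that $\b{t}\cdot\b{n}=0$ and consequently $\b{t}\cdot(I-\b{n}\b{n})=\b{t}$. Since $A$ is smooth up to $\Gamma$ from each side (so that the one-sided limits $(\nabla A)^{\pm}$ exist, in the same spirit as Lemma \ref{ass:u} for $\b{u}$), the traces $A^{\pm}$ restrict to smooth functions on $\Gamma$, and I would pick a smooth curve $\gamma(s)\subset\Gamma$ with $\gamma(0)=x$ and $\gamma'(0)=\b{t}$. The key identity is then
\begin{gather*}
\frac{d}{ds}\Big(A^{\pm}\circ\gamma\Big)\Big|_{s=0}=\b{t}\cdot(\nabla A)^{\pm},
\end{gather*}
because differentiating the boundary trace along a curve lying in $\Gamma$ agrees with contracting the one-sided gradient limit with $\b{t}$; equivalently, tangential differentiation commutes with taking the one-sided boundary limit. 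The left-hand side is by definition the directional derivative $\b{t}\cdot\nabla_{\Gamma}A^{\pm}$ of the surface function $A^{\pm}$.

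Subtracting the $+$ and $-$ versions gives $\b{t}\cdot[\nabla A]=\b{t}\cdot\nabla_{\Gamma}[A]$ for every tangent $\b{t}$. Using $\b{t}\cdot(I-\b{n}\b{n})=\b{t}$ on the left, this reads $\b{t}\cdot\big((I-\b{n}\b{n})\cdot[\nabla A]\big)=\b{t}\cdot\nabla_{\Gamma}[A]$. Both $(I-\b{n}\b{n})\cdot[\nabla A]$ and $\nabla_{\Gamma}[A]$ are tangential vectors, so two tangential vectors that agree against every tangent test vector must be equal, which yields the first equality and closes the proof.

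The step I expect to be the main obstacle is the key identity relating $\b{t}\cdot(\nabla A)^{\pm}$ to the surface derivative of the trace $A^{\pm}$: it is intuitively the statement that moving tangentially keeps one on $\Gamma$, so the limit and the tangential derivative interchange, but it genuinely relies on $A$ being $C^1$ up to $\Gamma$ from each side. If $A$ only possessed one-sided limits of $\nabla A$ without uniform control, I would justify the interchange by a short Taylor expansion of $A$ along $\gamma(s)\pm s\b{n}$, in the spirit of the divided-difference argument used in the proof of the stretching equation; otherwise the identity is immediate.
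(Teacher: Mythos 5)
Your proof is correct, and it is at heart the same geometric argument as the paper's, just in differentiated rather than integrated form. The paper also works with a curve $\gamma\subset\Gamma$ offset by $\pm\frac{\e}{2}\b{n}$, but instead of differentiating the traces it applies the fundamental theorem of calculus along the offset paths, $A(T_{\e}^{\pm}(x_1))-A(T_{\e}^{\pm}(x_0))=\int_{T_{\e}^{\pm}(\gamma)}d\b{l}\cdot\nabla A$, lets $\e\to 0$ under the integral sign, and concludes $\int_{\gamma}d\b{l}\cdot[\nabla A]=[A](x_1)-[A](x_0)=\int_{\gamma}d\b{l}\cdot\nabla_{\Gamma}[A]$ for every path $\gamma$, whence the identity by arbitrariness of the path. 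The practical difference is where the analytic work sits: your pointwise identity $\frac{d}{ds}\big(A^{\pm}\circ\gamma\big)\big|_{s=0}=\b{t}\cdot(\nabla A)^{\pm}$ --- which you rightly flag as the main obstacle --- requires interchanging a one-sided limit with a derivative, whereas the paper only needs to pass a limit under an integral, which follows at once from boundedness and one-sided continuity of $\nabla A$ near $\Gamma$ (the analogue of Lemma \ref{ass:u}). Indeed, your proposed repair --- a Taylor or divided-difference expansion along $\gamma(s)\pm\e\b{n}$, integrated and then differentiated --- reconstructs exactly the paper's integral step, so the two routes use the same hypotheses and deliver the same conclusion; the integral formulation is simply more economical in its regularity bookkeeping, while yours makes the pointwise tangential-derivative content of the lemma more explicit.
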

\begin{proof}
Let $\gamma$ be a path in $\Gamma$ with endpoints being $x_0$ and $x_1$. Let $T_{\e}^+(x)=x+\frac{\e}{2}\b{n}$ and
$T_{\e}^-(x)=x-\frac{\e}{2}\b{n}$. Then, $A(T_{\e}^{\pm}(x_1))-A(T_{\e}^{\pm}(x_0))=\int_{T_{\e}^{\pm}(\gamma)}d\b{l}\cdot\nabla A$. Hence, 
\begin{gather*}
\int_{\gamma}d\b{l}\cdot[\nabla A]=\lim_{\e\to 0}\Big(\int_{T_{\e}^{+}(\gamma)}d\b{l}\cdot\nabla A-\int_{T_{\e}^{-}(\gamma)}d\b{l}\cdot\nabla A\Big)=[A](x_1)-[A](x_0)=\int_{\gamma}d\b{l}\cdot\nabla_{\Gamma}[ A].
\end{gather*}
Since $\gamma$ is arbitrary, the claim then follows.
\end{proof}
\begin{thm}
With the setting in Section \ref{sec:eulerian}, 
\begin{gather}
\frac{1}{Re}[\partial_i\b{u}]=-n_i\b{f}_1.\\
[\partial_i p]=(\partial_i-n_i\b{n}\cdot\nabla)f_2+n_i\nabla_{\Gamma}\cdot\b{f}_1.
\end{gather}
where $\partial_i$ denotes the derivative on the $i$-th Cartesian coordinate, namely $x,y$ or $z$.
\end{thm}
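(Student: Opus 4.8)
The plan is to decompose each gradient jump into its normal and tangential parts along the differentiation index, and then to identify each part separately using the results already in hand. For any quantity $A$ defined near $\Gamma$ I would write $[\nabla A]=\b{n}(\b{n}\cdot[\nabla A])+(I-\b{n}\b{n})\cdot[\nabla A]$, where the projection $I-\b{n}\b{n}$ acts on the gradient (differentiation) index. The normal piece is exactly what the earlier normal jump conditions provide, while the tangential piece is governed by Lemma \ref{lmm:tangent}, which rewrites $(I-\b{n}\b{n})\cdot[\nabla A]$ as $\nabla_{\Gamma}[A]$. Since $[\b{u}]$ and $[p]$ are already known explicitly, the tangential contributions collapse immediately.

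First I would treat the velocity. Setting $T_{ij}=[\partial_i u_j]$ and splitting the differentiation index $i$ gives $[\partial_i u_j]=n_i\,(\b{n}\cdot[\nabla\b{u}])_j+\big((I-\b{n}\b{n})\cdot[\nabla\b{u}]\big)_{ij}$. Applying Lemma \ref{lmm:tangent} componentwise (to each scalar $A=u_j$), the tangential piece equals $\nabla_{\Gamma}[u_j]$, which vanishes because $[\b{u}]=0$. The normal piece is precisely $\b{n}\cdot[\nabla\b{u}]=-Re\,\b{f}_1$ from Theorem \ref{thm:p}. Hence $[\partial_i u_j]=-Re\,n_i(\b{f}_1)_j$, that is $\frac{1}{Re}[\partial_i\b{u}]=-n_i\b{f}_1$.

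Next I would handle the pressure in the same fashion. Splitting $[\partial_i p]=n_i\,[\partial p/\partial n]+\big((I-\b{n}\b{n})\cdot[\nabla p]\big)_i$, the normal part is supplied by Lemma \ref{lmm:normalgradp} as $n_i\,\nabla_{\Gamma}\cdot\b{f}_1$. For the tangential part, Lemma \ref{lmm:tangent} with $A=p$ gives $(I-\b{n}\b{n})\cdot[\nabla p]=\nabla_{\Gamma}[p]=\nabla_{\Gamma}f_2$, whose $i$-th component is $(\partial_i-n_i\b{n}\cdot\nabla)f_2$ since $\nabla_{\Gamma}=(I-\b{n}\b{n})\cdot\nabla$ and $[p]=f_2$. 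Adding the two contributions yields $[\partial_i p]=(\partial_i-n_i\b{n}\cdot\nabla)f_2+n_i\nabla_{\Gamma}\cdot\b{f}_1$, as claimed.

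The only real care needed is the index bookkeeping: one must project along the differentiation index (the index on which $\nabla$ acts), so that Lemma \ref{lmm:tangent}, which is stated precisely for the gradient index, applies verbatim and so that the contraction $\b{n}\cdot[\nabla\b{u}]$ from Theorem \ref{thm:p} lines up with the normal component rather than the contraction $[\nabla\b{u}]\cdot\b{n}=0$ on the other index. I expect this to be the main, though modest, obstacle; once the correct index is projected, both identities follow simply by substituting the known jumps $[\b{u}]=0$ and $[p]=f_2$ together with the previously derived normal-direction conditions.
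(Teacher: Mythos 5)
Your proposal is correct and follows essentially the same route as the paper: decompose each gradient jump into normal and tangential parts, use Lemma \ref{lmm:tangent} together with $[\b{u}]=0$ and $[p]=f_2$ to evaluate the tangential pieces, and substitute Theorem \ref{thm:p} and Lemma \ref{lmm:normalgradp} for the normal pieces. Your extra care about which index is contracted (the differentiation index, so that $\b{n}\cdot[\nabla\b{u}]$ rather than $[\nabla\b{u}]\cdot\b{n}=0$ appears) is exactly the bookkeeping implicit in the paper's shorter argument.
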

\begin{proof}
Applying Lemma \ref{lmm:tangent} to $\b{u}$ and $p$, we find $(I-\b{n}\b{n})\cdot[\nabla\b{u}]=0$
and that $(I-\b{n}\b{n})\cdot[\nabla p]=(I-\b{n}\b{n})\cdot\nabla f_2$. Hence, combing Theorem \ref{thm:p} and  Lemma \ref{lmm:normalgradp}, we  have
$\frac{1}{Re}[\nabla\b{u}]=-\b{n}\b{f}_1$
and $[\nabla p]=(I-\b{n}\b{n})\cdot\nabla f_2+\b{n}\nabla_{\Gamma}\cdot\b{f}_1$.
\end{proof}

We move on to the jump conditions of $\nabla\nabla\b{u}$, the Hessian of the velocity field. We start with showing a lemma regarding the symmetry of $\nabla\b{n}$:
\begin{lem}\label{lmm:sym}
In the 3D space, fix a point $x$ on a smooth surface $\Gamma$. Suppose $\b{t}$ and $\b{b}$ are two unit vectors in the tangent plane at $x$ and $\b{t}\cdot\b{b}=0$, $\b{t}\times\b{b}=\b{n}$. Then, for any smooth extension of the normal vector $\b{n}$ into a neighborhood of $x$, the following holds at $x$
\begin{gather}
\b{t}\cdot\nabla\b{n}\cdot\b{b}=\b{b}\cdot\nabla\b{n}\cdot\b{t}.
\end{gather}
\end{lem}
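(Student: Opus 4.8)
The plan is to read this identity as the classical statement that the second fundamental form (equivalently, the Weingarten/shape operator) is symmetric, and to derive it from equality of mixed partial derivatives. The conceptual point that makes the ``any smooth extension'' clause harmless is that both $\b{t}\cdot\nabla\b{n}\cdot\b{b}$ and $\b{b}\cdot\nabla\b{n}\cdot\b{t}$ involve differentiating $\b{n}$ only in directions tangent to $\Gamma$; such tangential directional derivatives depend on the restriction $\b{n}|_{\Gamma}$ alone and are therefore insensitive to how $\b{n}$ is continued off the surface. Hence it suffices to compute both sides in terms of a surface parametrization.

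First I would fix a smooth local parametrization $\b{X}(\xi_1,\xi_2)$ of $\Gamma$ near $x$ with $\b{X}(0,0)=x$, chosen so that $\b{X}_{\xi_1}=\b{t}$ and $\b{X}_{\xi_2}=\b{b}$ at $x$; this is possible because $\{\b{t},\b{b}\}$ is a basis of the tangent plane. Along the surface the chain rule gives $\partial_{\xi_a}(\b{n}\circ\b{X})=\b{X}_{\xi_a}\cdot\nabla\b{n}$ in the paper's convention $(\b{v}\cdot\nabla\b{n})_j=v_i\partial_i n_j$, so evaluating at $x$ I obtain
\begin{gather*}
\b{t}\cdot\nabla\b{n}\cdot\b{b}=(\partial_{\xi_1}\b{n})\cdot\b{X}_{\xi_2},\qquad
\b{b}\cdot\nabla\b{n}\cdot\b{t}=(\partial_{\xi_2}\b{n})\cdot\b{X}_{\xi_1}.
\end{gather*}

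The main step is then to exploit the orthogonality relation $\b{n}\cdot\b{X}_{\xi_a}=0$, valid on all of $\Gamma$. Differentiating it with respect to $\xi_b$ yields $(\partial_{\xi_b}\b{n})\cdot\b{X}_{\xi_a}=-\b{n}\cdot\b{X}_{\xi_a\xi_b}$, and the right-hand side is symmetric in $a\leftrightarrow b$ because $\b{X}_{\xi_a\xi_b}=\b{X}_{\xi_b\xi_a}$. Taking $(a,b)=(2,1)$ and $(a,b)=(1,2)$ gives $(\partial_{\xi_1}\b{n})\cdot\b{X}_{\xi_2}=(\partial_{\xi_2}\b{n})\cdot\b{X}_{\xi_1}$, which is exactly the claimed identity after substituting the two expressions above.

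I expect the only delicate point, rather than a genuine obstacle, to be the justification of extension-independence: one must note that the tangential chain rule $\partial_{\xi_a}(\b{n}\circ\b{X})=\b{X}_{\xi_a}\cdot\nabla\b{n}$ uses only that $\b{X}_{\xi_a}$ is tangent, so no normal derivatives of the extension enter. As an alternative route one can check that the antisymmetric combination equals $\b{n}\cdot(\nabla\times\b{n})$ (using $\b{n}=\b{t}\times\b{b}$ and the $\varepsilon$-tensor), and that this vanishes for the level-set extension $\b{n}=\nabla\phi/|\nabla\phi|$, since $\nabla\times(|\nabla\phi|^{-1}\nabla\phi)=\nabla(|\nabla\phi|^{-1})\times\nabla\phi$ is perpendicular to $\b{n}$; extension-independence then transfers the vanishing to every smooth extension.
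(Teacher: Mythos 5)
Your proof is correct, but it takes a genuinely different route from the paper's. The paper observes that the antisymmetric combination $\b{t}\cdot\nabla\b{n}\cdot\b{b}-\b{b}\cdot\nabla\b{n}\cdot\b{t}$ equals (up to sign) $\b{n}\cdot(\nabla\times\b{n})$, notes that this quantity involves only tangential derivatives of $\b{n}$ and is therefore independent of the extension, and then kills it by computing with the special level-set extension $\b{n}=\nabla\phi/|\nabla\phi|$, for which $\nabla\times\b{n}=\nabla(|\nabla\phi|^{-1})\times\nabla\phi$ is perpendicular to $\b{n}$. You instead prove the statement as the classical symmetry of the second fundamental form: parametrize $\Gamma$ by $\b{X}(\xi_1,\xi_2)$ with $\b{X}_{\xi_1}=\b{t}$, $\b{X}_{\xi_2}=\b{b}$ at $x$, convert both sides to $(\partial_{\xi_a}\b{n})\cdot\b{X}_{\xi_b}$ via the tangential chain rule, and use $\b{n}\cdot\b{X}_{\xi_a}=0$ on $\Gamma$ together with $\b{X}_{\xi_1\xi_2}=\b{X}_{\xi_2\xi_1}$ to conclude $(\partial_{\xi_1}\b{n})\cdot\b{X}_{\xi_2}=-\b{n}\cdot\b{X}_{\xi_1\xi_2}=(\partial_{\xi_2}\b{n})\cdot\b{X}_{\xi_1}$. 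Both arguments hinge on the same key observation, which you state explicitly: tangential derivatives of $\b{n}$ depend only on $\b{n}|_{\Gamma}$, so the extension is irrelevant. What your main argument buys is that it is more elementary and self-contained: it needs no distinguished extension and no level-set representation of $\Gamma$ near $x$, only a local parametrization and equality of mixed partials, and it makes the geometric content (symmetry of the shape operator) explicit. The paper's argument is shorter within its own framework, since the level-set extension is already the central object there; indeed, your closing remark (antisymmetric part equals $\b{n}\cdot(\nabla\times\b{n})$, which vanishes for the level-set extension and transfers to all extensions by extension-independence) is precisely the paper's proof.
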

\begin{proof} 
The claim simply says $
\b{t}\cdot\nabla\b{n}\cdot\b{b}-\b{t}\cdot\nabla\b{n}\cdot\b{b}
=\b{t}\cdot(\nabla\b{n}-\nabla\b{n}^T)\cdot\b{b}=0$, or equivalently
$\b{b}\cdot((\nabla\times\b{n})\times\b{t})=-\b{n}\cdot(\nabla\times\b{n})=0$.
Note that $\b{t}\cdot\nabla\b{n}$ and $\b{b}\cdot\nabla\b{n}$ only depend on the values of $\b{n}$ on the surface, which is independent of the extension of $\b{n}$. Hence, the value $\b{n}\cdot(\nabla\times\b{n})$ is independent of the extension of $\b{n}$. Picking the special extension $\b{n}=\nabla\phi/|\nabla\phi|$ yields $\b{n}\cdot(\nabla\times\b{n})=0$.
\end{proof}
\begin{xrem}
By this result, we can obtain the following amusing claim, though not relevant to us:

Suppose $\b{v}$ is a smooth vector field defined in a domain (open, connected) in $\mathbb{R}^3$ and $\b{v}(x_0)\neq0$. If $\b{v}\cdot(\nabla\times\b{v})\neq 0$ at $x_0$, then, there is no neighborhood $U$ of $x_0$ and smooth surface $S$ such that $S$ passes through point $x_0$ and $\b{v}$ is perpendicular with the surface at every point $x\in U\cap S$.
\begin{proof}
Suppose there is $S$. Then, $\b{n}=\b{v}/|\b{v}|$ is an extension of the normal vector of $S$. Then, $\b{n}\cdot(\nabla\times\b{n})=0$, which is equivalent to $\b{v}\cdot(\nabla\times\b{v})=0$, but this is not true.
\end{proof}
\end{xrem}

We now prove the following jump conditions for the Hessian of $\b{u}$. Below, Einstein's convention is used, namely repeated indices imply summation. $\delta_{ij}$ is the usual Kronecker delta. 
\begin{thm}
With the setting in Section \ref{sec:eulerian}, the jump conditions of $\partial_i\partial_j\b{u}$ are given by
\begin{multline}
\frac{1}{Re}[\partial_i\partial_j\b{u}]=
-\Big(\partial_jn_i-n_k(\partial_kn_i)n_j\Big) \b{f}_1\\
-(n_j\partial_i+n_i\partial_j-2n_in_jn_k\partial_k)\b{f}_1
+n_in_j([\nabla p]-\kappa \b{f}_1),
\end{multline}
where $i$ or $j$ means the component of the Cartesian coordinates.  In the 2D case, 
$\partial_in_j=-\kappa(\delta_{ij}-n_in_j)+n_in_k\partial_kn_j$,
so that 
\begin{gather}
\frac{1}{Re}[\partial_i\partial_j\b{u}]=
\kappa(\delta_{ij}-2n_in_j) \b{f}_1-(n_j\partial_i+n_i\partial_j-2n_in_jn_k\partial_k)\b{f}_1+n_in_j[\nabla p].
\end{gather}
\end{thm}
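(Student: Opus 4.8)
The plan is to assemble $S_{ij}:=\tfrac{1}{Re}[\partial_i\partial_j\b{u}]$ from two independent ingredients: the tangential derivatives of the already-established first-order jump $\tfrac{1}{Re}[\partial_j\b{u}]=-n_j\b{f}_1$, and the trace of the Hessian jump, which the momentum equation ties to $[\nabla p]$. First I would record the trace identity. Away from $\Gamma$ the singular force vanishes, so on each side $\tfrac{1}{Re}\Delta\b{u}=\b{u}_t+\b{u}\cdot\nabla\b{u}+\nabla p$; taking the jump and using $[\b{u}_t]+\b{u}\cdot[\nabla\b{u}]=0$ together with continuity of $\b{u}$ (so that $[\b{u}\cdot\nabla\b{u}]=\b{u}\cdot[\nabla\b{u}]$) gives $\tfrac{1}{Re}[\Delta\b{u}]=[\nabla p]$, i.e. $\delta_{kl}S_{kl}=[\nabla p]$.

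Next I would extract the tangential information. Applying Lemma \ref{lmm:tangent} componentwise to $A=\partial_j\b{u}$ yields $(\delta_{ik}-n_in_k)[\partial_k\partial_j\b{u}]=(\delta_{ik}-n_in_k)\partial_k[\partial_j\b{u}]$, and substituting $\tfrac{1}{Re}[\partial_j\b{u}]=-n_j\b{f}_1$ turns the right-hand side into explicit derivatives of $n_j\b{f}_1$. I would then split the symmetric tensor $S_{ij}$ through $\delta_{ik}=(\delta_{ik}-n_in_k)+n_in_k$ in both indices, writing $S_{ij}=(\delta_{ik}-n_in_k)S_{kj}+n_in_kS_{kl}(\delta_{lj}-n_ln_j)+n_in_jn_kn_lS_{kl}$. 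The first term is the ``tangential in $i$'' block and the second the mixed block; both are computed from the previous display, where the identity $n_k\partial_ln_k=\tfrac12\partial_l|\b{n}|^2=0$ kills several contractions and collapses all the differentiated-force contributions to $-(n_j\partial_i+n_i\partial_j-2n_in_jn_k\partial_k)\b{f}_1$.

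The normal-normal block requires $n_kn_lS_{kl}$, which I would obtain by subtracting the tangential-tangential trace from the full trace: $n_kn_lS_{kl}=\delta_{kl}S_{kl}-(\delta_{kl}-n_kn_l)S_{kl}$. The full trace is $[\nabla p]$ from the first step, while the tangential-tangential trace reduces, again via $n_k\partial_ln_k=0$ and $\nabla\cdot\b{n}=-\kappa$, to $\kappa\b{f}_1$, so $n_kn_lS_{kl}=[\nabla p]-\kappa\b{f}_1$ and the last term is $n_in_j([\nabla p]-\kappa\b{f}_1)$. Finally I would tidy the undifferentiated $\b{f}_1$ coefficient: the tangential block delivers $-(\partial_in_j-n_in_k\partial_kn_j)\b{f}_1=-(\delta_{ia}-n_in_a)\partial_an_j\,\b{f}_1$, and since $\partial_an_j$ is already tangential in its second index (because $n_j\partial_an_j=0$), Lemma \ref{lmm:sym} lets me symmetrize it into $-(\partial_jn_i-n_k(\partial_kn_i)n_j)\b{f}_1$, matching the stated form.

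The 2D case is then a specialization: the tangent space is one-dimensional, so the tangential-tangential part of $\nabla\b{n}$ equals $-\kappa$ times the tangential projector, which is exactly $\partial_in_j=-\kappa(\delta_{ij}-n_in_j)+n_in_k\partial_kn_j$; substituting collapses the first two $\b{f}_1$ terms into $\kappa(\delta_{ij}-2n_in_j)\b{f}_1$. I expect the main obstacle to be organizational rather than conceptual: keeping the projector/normal bookkeeping consistent across the three blocks, correctly invoking Lemma \ref{lmm:tangent} to justify moving $\partial_k$ onto the jump $[\partial_j\b{u}]$, and deploying Lemma \ref{lmm:sym} at precisely the right place so that the asymmetric $\partial_in_j$ that falls out of the calculation is recognized as the symmetric object appearing as $\partial_jn_i$ in the claim.
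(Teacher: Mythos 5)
Your proposal is correct, and it rests on the same three pillars as the paper's own proof---the trace identity $\tfrac{1}{Re}[\Delta\b{u}]=[\nabla p]$ obtained from the momentum equation together with $[\b{u}_t]+\b{u}\cdot[\nabla\b{u}]=0$, Lemma \ref{lmm:tangent} applied to $\tfrac{1}{Re}[\nabla\b{u}]=-\b{n}\b{f}_1$, and Lemma \ref{lmm:sym}---but it assembles them through a genuinely different decomposition. The paper introduces an orthonormal frame $\{\b{t},\b{b},\b{n}\}$, computes all six frame components $[\partial_{tt}\b{u}],[\partial_{bb}\b{u}],[\partial_{nn}\b{u}],[\partial_{tn}\b{u}],[\partial_{bn}\b{u}],[\partial_{tb}\b{u}]$, and reassembles them into Cartesian form via the auxiliary tensors $\Lambda$ and $\Sigma$ and the identity $t_it_j+b_ib_j+n_in_j=\delta_{ij}$; there Lemma \ref{lmm:sym} enters as the consistency check $[\partial_{tb}\b{u}]=[\partial_{bt}\b{u}]$, and the 2D case is asserted separately with proof omitted. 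You never leave Cartesian indices: you split $S_{ij}=\tfrac{1}{Re}[\partial_i\partial_j\b{u}]$ into the projector blocks $(\delta_{ik}-n_in_k)S_{kj}$, $n_in_kS_{kl}(\delta_{lj}-n_ln_j)$ and $n_in_jn_kn_lS_{kl}$, handle the mixed block by the symmetry of mixed partials (valid because $\b{u}$ is smooth on each side of $\Gamma$, so the one-sided Hessians commute), obtain the normal--normal block by subtracting the tangential trace $\kappa\b{f}_1$ from the full trace $[\nabla p]$, and invoke Lemma \ref{lmm:sym} only at the very end, to rewrite $(\delta_{ik}-n_in_k)\partial_kn_j$ as $(\delta_{jk}-n_jn_k)\partial_kn_i$ so the curvature term matches the stated transposed form (legitimate, since that tensor is tangential in both slots). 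Your route buys a frame-free, dimension-independent argument in which the 2D formula is a literal specialization rather than a parallel computation, and it dispenses with the $\Lambda$, $\Sigma$ bookkeeping; the paper's frame computation has the compensating benefit of exhibiting the individual directional jumps, e.g.\ $\tfrac{1}{Re}[\partial_{nn}\b{u}]=[\nabla p]-\kappa\b{f}_1$ and $\tfrac{1}{Re}[\partial_{tn}\b{u}]=-\b{t}\cdot\nabla\b{f}_1$, which are often the quantities one wants directly when implementing the immersed interface method.
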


\begin{xrem}
Like $\nabla p$, $\nabla\nabla\b{u}$ has a delta-like singularity at the interface since the force has a such singularity,  but the jump of $\nabla\nabla\b{u}$ is well-defined and is unrelated to this singularity. 
\end{xrem}
 
 \begin{proof}
Applying Lemma \ref{lmm:tangent} to $[\nabla\b{u}]$ directly, we have
\begin{gather*}
\frac{1}{Re}(I-\b{n}\b{n})\cdot[\nabla\nabla\b{u}]=(I-\b{n}\b{n})\cdot Q,
\end{gather*}
where $Q$ is a third order tensor given by $$
Q_{ij:}=-\partial_in_j \b{f}_1
-n_j\partial_i\b{f}_1.
$$
Since $[\b{u}_t]+\b{u}\cdot[\nabla\b{u}]=0$, we have 
\begin{gather*}
[\nabla p]=\frac{1}{Re}[\Delta \b{u}].
\end{gather*}

Let's now continue to prove for 3D case. The proof for the 2D case is much easier but follows a similar argument and thus we choose to omit it.

Let $\b{t}$ and $\b{b}$ be in the tangent plane and $\{\b{t}, \b{b}, \b{n}\}$ forms an orthonormal basis.  $I=\b{t}\b{t}+\b{n}\b{n}
+\b{b}\b{b}$ where $I$ is the identity tensor. (Recall that $\b{v}\b{w}$ means the tensor product between $\b{v}$ and $\b{w}$.) In index form,
\begin{gather}\label{eq:identity}
t_it_j+b_ib_j+n_in_j=\delta_{ij}.
\end{gather}

We denote $M:N=M_{ij}N_{ij}$ and that $\partial_{tb}=\b{t}\b{b}:\nabla\nabla$ (Note that this is not $\b{t}\cdot\nabla(\b{b}\cdot\nabla)$). Then, the following hold
\begin{gather*}
\frac{1}{Re}[\partial_{tt}\b{u}]=\b{t}\b{t}:Q=-t_i\partial_in_jt_j \b{f}_1,\\
\frac{1}{Re}[\partial_{bb}\b{u}]=\b{b}\b{b}:Q=-b_i\partial_in_jb_j\b{f}_1.
\end{gather*}
Since $\Delta=\partial_{tt}+\partial_{bb}+\partial_{nn}$, we have
\begin{gather*}
\frac{1}{Re}[\partial_{nn}\b{u}]=[\nabla p]-(I-\b{n}\b{n}):Q=[\nabla p]-(-\nabla\cdot\b{n})\b{f}_1=[\nabla p]-\kappa \b{f}_1.
\end{gather*}
Clearly,
\begin{gather*}
\frac{1}{Re}[\partial_{tn}\b{u}]=-\b{t}\cdot\nabla\b{f}_1,\\
\frac{1}{Re}[\partial_{bn}\b{u}]=-\b{b}\cdot\nabla\b{f}_1,\\
\frac{1}{Re}[\partial_{tb}\b{u}]=-(\b{t}\b{b}:\nabla\b{n})\b{f}_1.
\end{gather*}
Since $\partial_{tb}\b{u}=\partial_{bt}\b{u}$ for the derivatives, we must have $\b{t}\b{b}:\nabla\b{n}=\b{b}\b{t}:\nabla\b{n}$ to be consistent. This is exactly the result in Lemma \ref{lmm:sym}. We denote $(\b{a}\b{b}\b{c}\b{d}:\nabla\b{n})_{ij}=a_ib_j((\b{c}\cdot\nabla)\b{n})\cdot\b{d}$.
Then, we find the jump of the Hessian
\begin{multline*}
\frac{1}{Re}[\nabla\nabla\b{u}]-\b{n}\b{n}\frac{1}{Re}[\partial_{nn}\b{u}]
=-\b{t}\b{t}\b{t}\b{t}:\nabla\b{n}\b{f}_1-\b{b}\b{b}\b{b}\b{b}:\nabla\b{n}\b{f}_1\\
-\b{t}\b{b}\b{b}\b{t}:\nabla\b{n}P\b{f}_1-\b{b}\b{t}\b{t}\b{b}:\nabla\b{n}\b{f}_1
-(\b{t}\b{n}+\b{n}\b{t})\b{t}\cdot\nabla\b{f}_1
-(\b{b}\b{n}+\b{n}\b{b})\b{b}\cdot\nabla\b{f}_1.
\end{multline*}
If we introduce the forth order tensor $\Lambda=\b{t}\b{t}\b{t}\b{t}+
\b{b}\b{b}\b{b}\b{b}+\b{t}\b{b}\b{b}\b{t}+\b{b}\b{t}\b{t}\b{b}$ and the third order tensor $\Sigma=\b{t}\b{n}\b{t}+\b{n}\b{t}\b{t}+\b{b}\b{n}\b{b}
+\b{n}\b{b}\b{b}$, we then have
\begin{gather*}
\frac{1}{Re}[\nabla\nabla\b{u}]=-(\Lambda:\nabla\b{n})\b{f}_1
-\Sigma\cdot\nabla\b{f}_1+\b{n}\b{n}([\nabla p]-\kappa \b{f}_1).
\end{gather*}

Using the identity \eqref{eq:identity}, we find \begin{gather*}
\Lambda_{ijkl}=(\delta_{il}-n_in_l)(\delta_{jk}-n_jn_k),\\
\Sigma_{ijk}=(\delta_{ik}-n_in_k)n_j+n_i(\delta_{jk}-n_jn_k).
\end{gather*}
Simple computation reveals $$(\Lambda:\nabla\b{n})_{ij}=\partial_jn_i-n_k(\partial_kn_i)n_j,$$
and
$$
(\Sigma\cdot\nabla)_{ij}=n_j\partial_i+n_i\partial_j-2n_in_jn_k\partial_k.
$$
The claim is therefore true for 3D case. For 2D case, this general formula also holds and we omit the proof.

To simplify the general formula for the 2D case, we note $tr(\nabla\b{n})=\nabla\cdot\b{n}=-\kappa$ and $\b{t}\cdot\nabla\b{n}=-\kappa\b{t}$. Clearly, $(\nabla\b{n})\cdot\b{n}=0$, which implies that the components for $\b{t}\b{n},\b{n}\b{n}$ are all zero. Hence,
$\nabla\b{n}=\b{t}\b{t}(-\kappa)+\lambda\b{n}\b{t}$. That the extension of $\b{n}$ is undetermined implies that we can't determine $\lambda$. It follows that $$(I-\b{n}\b{n})\cdot\nabla\b{n}=-\kappa(I-\b{n}\b{n}).$$  This expression can also be obtained by noting $\nabla\b{n}^T-(\b{n}\b{n}\cdot\nabla\b{n})^T=\Lambda:\nabla\b{n}=\b{t}\b{t}(\b{t}\cdot(\b{t}\cdot\nabla\b{n}))=-\kappa\b{t}\b{t}$.
\end{proof}

We now turn our attention to the jump conditions of $\nabla\nabla p$. The results can be obtained similarly as those of $\nabla\nabla\b{u}$. 
\begin{thm}
With the setting in Section \ref{sec:eulerian}, the jump conditions of the second order derivatives of $p$ are given by 
\begin{multline}
[\partial_i\partial_jp]=n_in_j\Big\{-[\partial_lu_m\partial_mu_l]-\Delta f_2
+\kappa g\Big\}\\
+\partial_i\partial_j f_2+(\partial_jn_i-n_k\partial_kn_i n_j)g
+(n_j\partial_i+n_i\partial_j-2n_in_jn_k\partial_k)g,
\end{multline}
where $g=\nabla_{\Gamma}\cdot\b{f}_1-\b{n}\cdot\nabla f_2$. In the 2D case, denoting $u=u_1, v=u_2$, we have
\begin{multline}
[\partial_i\partial_jp]=n_in_j\Big\{2[\partial_xu\partial_y v]-2[\partial_xv\partial_y u]\Big\}+\partial_i\partial_j f_2-n_in_j\Delta f_2\\
-\kappa(\delta_{ij}-2n_in_j)g+(n_j\partial_i+n_i\partial_j-2n_in_jn_k\partial_k)g.
\end{multline}
\end{thm}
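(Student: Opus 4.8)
The plan is to mirror the proof of the Hessian jump for $\b{u}$, now using the single scalar jump $[\partial_i p]$ as the seed. Combining Theorem \ref{thm:p} with Lemma \ref{lmm:normalgradp} and the definition $g=\nabla_{\Gamma}\cdot\b{f}_1-\b{n}\cdot\nabla f_2$, the first-order jump collapses to the clean form $[\partial_i p]=\partial_i f_2+n_i g$. This plays exactly the role that $\frac{1}{Re}[\partial_j\b{u}]=-n_j\b{f}_1$ played before, so I expect the whole argument to run in parallel.

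First I would extract the tangential part of the Hessian. Applying Lemma \ref{lmm:tangent} to the scalar $\partial_j p$ (for each fixed $j$) gives $(I-\b{n}\b{n})\cdot[\nabla\nabla p]=(I-\b{n}\b{n})\cdot\tilde{Q}$ with $\tilde{Q}_{ij}=\partial_i\partial_j f_2+(\partial_i n_j)g+n_j\partial_i g$, obtained simply by differentiating $[\partial_j p]$. With $\{\b{t},\b{b},\b{n}\}$ an orthonormal frame, this produces every frame component except the purely normal one: using $n_j t_j=n_j b_j=0$ and $(\nabla\b{n})\cdot\b{n}=0$ I get $[\partial_{tt}p]=\partial_{tt}f_2+(\b{t}\cdot\nabla\b{n}\cdot\b{t})g$, the analogous $\b{b}\b{b}$ and $\b{t}\b{b}$ terms, and the mixed $[\partial_{tn}p]=\partial_{tn}f_2+\b{t}\cdot\nabla g$, $[\partial_{bn}p]=\partial_{bn}f_2+\b{b}\cdot\nabla g$. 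Consistency of the $\b{t}\b{b}$ versus $\b{b}\b{t}$ component is exactly Lemma \ref{lmm:sym}.

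Next I would supply the normal-normal component, which Lemma \ref{lmm:tangent} cannot reach. Taking the divergence of the momentum equation \eqref{eq:momscaled} and using $\nabla\cdot\b{u}=0$ gives, away from $\Gamma$ where $\b{f}$ vanishes, the pressure Poisson relation $\Delta p=-\partial_l u_m\partial_m u_l$, hence $[\Delta p]=-[\partial_l u_m\partial_m u_l]$. Writing $\Delta=\partial_{tt}+\partial_{bb}+\partial_{nn}$ and subtracting the two tangential traces already computed yields $[\partial_{nn}p]=-[\partial_l u_m\partial_m u_l]-\Delta f_2+\partial_{nn}f_2+\kappa g$, where the trace $\b{t}\cdot\nabla\b{n}\cdot\b{t}+\b{b}\cdot\nabla\b{n}\cdot\b{b}=\nabla\cdot\b{n}=-\kappa$ produces the $\kappa g$.

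Finally I would reassemble the full tensor in Cartesian form. The $\partial_{\alpha\beta}f_2$ pieces sum to $\partial_i\partial_j f_2$; the tangential $g$-coefficients collect into $(\Lambda:\nabla\b{n})_{ij}\,g=(\partial_j n_i-n_k\partial_k n_i\,n_j)g$ via the same $\Lambda$ used for $\b{u}$ (Lemma \ref{lmm:sym} guaranteeing symmetry); the mixed terms assemble into $(\Sigma\cdot\nabla g)_{ij}=(n_j\partial_i+n_i\partial_j-2n_in_jn_k\partial_k)g$; and the leftover normal piece gives the $n_in_j\{\cdots\}$ bracket, the $\partial_{nn}f_2$ already being absorbed into $\partial_i\partial_j f_2$. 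For the 2D case I would substitute $\partial_i n_j=-\kappa(\delta_{ij}-n_in_j)+n_in_k\partial_k n_j$, which merges the $\kappa g$ in the bracket with the tangential $g$-term into $-\kappa(\delta_{ij}-2n_in_j)g$, and rewrite the inertial term using incompressibility ($\partial_x u=-\partial_y v$) as $-[\partial_l u_m\partial_m u_l]=2[\partial_x u\partial_y v]-2[\partial_x v\partial_y u]$. The main obstacle is the bookkeeping of the frame decomposition and its faithful repackaging into the $\Lambda$ and $\Sigma$ Cartesian tensors; the analytic content (Lemma \ref{lmm:tangent}, the Poisson identity, and Lemma \ref{lmm:sym}) is light by comparison.
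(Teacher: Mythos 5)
Your proposal is correct and follows essentially the same route as the paper: seed with $[\partial_i p]=\partial_i f_2+n_i g$ (equivalent to the paper's tensor $H$), tangential differentiation via Lemma \ref{lmm:tangent}, the pressure Poisson identity $[\Delta p]=-[\partial_l u_m\partial_m u_l]$ for the $\b{n}\b{n}$ component, reassembly through the same $\Lambda$ and $\Sigma$ tensors, and the identical 2D reductions. The only cosmetic difference is that you simplify the first-order jump into the compact form involving $g$ before differentiating, whereas the paper carries $\nabla_{\Gamma}\cdot\b{f}_1-\b{n}\cdot\nabla f_2$ explicitly and abbreviates afterwards.
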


\begin{proof}
Taking the divergence of the momentum equation, we have
\begin{gather*}
[\Delta p]=-[\partial_i u_j\partial_j u_i].
\end{gather*}
Applying Lemma \ref{lmm:tangent} on $[\nabla p]$, we have $$
(I-\b{n}\b{n})\cdot[\nabla\nabla p]=(I-\b{n}\b{n})\cdot H,
$$
while the second order tensor $H$ is given by $$
H_{ij}=\partial_{ij}f_2+\partial_i n_j(\nabla_{\Gamma}\cdot\b{f}_1-\b{n}\cdot\nabla f_2)
+n_j\partial_i(\nabla_{\Gamma}\cdot\b{f}_1-\b{n}\cdot\nabla f_2)).
$$
To be convenient, we denote $g=\nabla_{\Gamma}\cdot\b{f}_1-\b{n}\cdot\nabla f_2$. Following a similar derivation as for $\nabla\nabla\b{u}$, we find
\begin{gather*}
[\partial_{tt}p]=t_it_jH_{ij}=t_it_j\partial_{ij}f_2+t_it_j\partial_i n_j g, \\
[\partial_{bb}p]=b_ib_j H_{ij}=b_ib_j\partial_{ij}f_2+b_ib_j\partial_{i}n_j g,\\
[\partial_{nn}p]=[\Delta p]-\Delta f_2+(\b{n}\b{n}:\nabla\nabla)f_2+\kappa g.
\end{gather*}
For the other components, 
\begin{gather*}
[\partial_{tn}p]=t_in_jH_{ij}=t_in_j\partial_{ij}f_2+t_i\partial_i g,\\
[\partial_{bn}p]=b_in_jH_{ij}=b_in_j\partial_{ij}f_2+b_i\partial_i g,\\
[\partial_{tb}p]=t_ib_j\partial_{ij}f_2+t_ib_j\partial_in_j g.
\end{gather*}
Hence, it follows that
\begin{gather*}
[\nabla\nabla p]-\b{n}\b{n}[\partial_{nn}p]
=[\nabla\nabla f_2-\b{n}\b{n}\b{n}\b{n}:\nabla\nabla f_2]
+\Lambda:\nabla \b{n} g+\Sigma\cdot\nabla g.
\end{gather*}
where $\Lambda$ and $\Sigma$ are defined the same as in the proof for the second derivatives of $\b{u}$.

In the 2D case, noting that $\partial_lu_m\partial_m u_l=-2\partial_xu\partial_yv+2\partial_xv\partial_yu$ by $\partial_xu+\partial_yv=0$, we then derived the claim.
\end{proof}

Now we have finished the derivation of spatial jump conditions. The expressions are in compact and clean forms, where the tangential forces are dealt with conveniently as well. They can be computed straightforwardly using the level set function $\phi$ and stretch function $\chi$.
\subsection{Temporal jumps}
When the interface $\Gamma$ passes the point $x$ at time $t$, a quantity $A$ is discontinuous in time. The temporal jump $[[A]]=A(x, t+)-A(x, t-)$ satisfies $[[A]]=[A]$ if $\b{u}\cdot\b{n}<0$, and $[[A]]=-[A]$ if $\b{u}\cdot\b{n}>0$. Here, $[\cdot]$ denotes the spatial jump defined before. In the application of the immersed interface method, one usually needs the particular temporal jumps: $[[\b{u}]]=0$ and $[[\b{u}_t]]$. $[\b{u}_t]=-\b{u}\cdot[\nabla\b{u}]$ can be computed easily. 

\section{Eulerian Immersed interface method} \label{sec:iim}
In this section, we propose a possible Eulerian immersed interface method for simulating the membranes or vesicles. The implementation will be left for future work. 
\subsection{The level set and stretch functions}
The level set function can be initially set to be the signed distance function to the interface. It can be initialized by Fast Sweeping Method \cite{zhao04} first and improved by solving the equation
\begin{gather}\label{eq:sgndis}
\varphi_{\tau}+\sgn(\phi_0)(|\nabla\varphi|-1)=0
\end{gather}
with subcell-resolution (see \cite{harten89, min10, lxs17}). Here, $\tau$ is a pseudo-time that relaxes the function to the signed distance function. 

The stretch function can be initialized using 
\begin{gather}\label{eq:stretch}
\chi_{\tau}+\sgn(\varphi)\nabla\varphi\cdot\nabla\chi=0.
\end{gather}
where $\varphi$ is the signed distance function obtained by Equation \eqref{eq:sgndis}.

The above two functions may be conveniently constructed in a tube containing the interface and the local level set method developed in \cite{pmozk99} then applies. As shown in \cite{lxs17}, $\chi$ may not be continuous globally and the local level set is actually desired. We'll then have to reinitialize them after certain steps. In the reinitialization step, we again solve these two equations to turn the level set function into a signed distance function and obtain a new stretch function. The analysis and numerical schemes for solving Equation \eqref{eq:stretch} were discussed in \cite{lxs17}. 

\subsection{The immersed interface method for Navier-Stokes equations}
With the spatial and temporal jump conditions derived in Section \ref{sec:jump}, one can then simulate the system with the immersed interface method under a fully Eulerian formulation. The issue is how to choose a suitable version of Navier-Stokes solvers with our jump conditions in hand.

Several versions of the immersed interface method for solving Navier-Stokes equations have been proposed in several papers. In \cite{lilai01, lkp06, ll03}, the combination of immersed interface method with projection methods was used, but the correction terms are quite complicated. In \cite{xw2d06}, an immersed interface method with direct discretization of the Navier-Stokes equations was proposed.  In \cite{beale15}, Beale considered an immersed interface method with a direct finite difference on a periodic domains. The convergence of the immersed interface method has been shown to be nearly second order with the assumption that the forces are given exactly. All seem to suggest the immersed interface method can be close to second order accurate, which is better than the immersed boundary method.

In 2D case, we propose to use the Navier-Stokes solver in \cite{xw2d06}, which has sufficient details for one to implement. The equations
\begin{gather}
\frac{\partial}{\partial t}\b{u}+\nabla\cdot(\b{u}\b{u})+\nabla p=\frac{1}{Re}\Delta \b{u}+\b{f}\\
\Delta p=-\frac{\partial D}{\partial t}-2\nabla\cdot(\b{u}D)+\frac{1}{Re}\Delta D+2(\partial_x u\partial_y v-\partial_y u\partial_x v)+\nabla\cdot\b{f},
\end{gather}
are proposed to solve, where $D=\nabla\cdot\b{u}$. $D$ is kept explicitly in the equation to enforce $D=0$ in numerics with MAC (marker-and-cell) grid.  Direct finite differences with correction terms have been proposed for these two equation in \cite{xw2d06}. The boundary condition for the pressure proposed there is a certain Neumann boundary condition. 

\appendix
\section{The force due to stretch} \label{app:stretchf}
In this section, we present the derivation of the force associated with the stretch energy.  The derivation here should be understood in the distribution sense as the Dirac delta function is involved and $\nabla \b{u}$ may be discontinuous (However, $\nabla\b{u}\cdot\b{n}$ and $\nabla\b{u}\cdot\nabla\phi$ are continuous). 

According to Equation \eqref{eq:elasf}, taking derivative on the stretch energy functional, and using the equations for $\chi,\phi$, we have
\begin{multline*}
\frac{d}{dt}\int_{\Omega}\frac{E_s(\chi)}{\chi}|\nabla\phi|\delta(\phi)dx
=\int_{\Omega} \left(\frac{E_s(\chi)}{\chi}\right)'\Big(-\b{u}\cdot\nabla\chi
-\b{n}\cdot\nabla\b{u}\cdot\b{n}\chi\Big)|\nabla\phi|\delta(\phi)dx\\
+\int_{\Omega} \frac{E_s(\chi)}{\chi}\delta'(\phi)(-\b{u}\cdot\nabla\phi)|\nabla\phi|dx
+\int_{\Omega}\frac{E_s(\chi)}{\chi}\delta(\phi)\frac{\nabla\phi}{|\nabla\phi|}\cdot\nabla(-\b{u}\cdot\nabla\phi)dx.
\end{multline*}
The prime associated with $E_s$ means derivative with respect to $\chi$ while the prime in $\delta$ means the derivative with respect to $\phi$ (the derivative of $\delta$ is understood in distribution sense). Integrating by parts, we have the following relations:
\begin{gather*}
\int_{\Omega} \left(\frac{E_s(\chi)}{\chi}\right)'(-\b{n}\cdot\nabla\b{u}\cdot\b{n}\chi)|\nabla\phi|\delta(\phi)dx
=\int_{\Omega}\b{u}\cdot\nabla\cdot\left(\Big(\frac{E_s(\chi)}{\chi}\Big)'\chi\b{n}\b{n}|\nabla\phi|\delta(\phi)\right)dx.\\
\int_{\Omega}\frac{E_s(\chi)}{\chi}\delta(\phi)\frac{\nabla\phi}{|\nabla\phi|}\cdot\nabla(-\b{u}\cdot\nabla\phi)dx
=\int_{\Omega}\b{u}\cdot\nabla\phi\nabla\cdot\left(\frac{E_s(\chi)}{\chi}\delta(\phi)\frac{\nabla\phi}{|\nabla\phi|}\right)dx.
\end{gather*}
Since $\b{u}$ is divergence free, we obtain the force due to stretch up to a total gradient:
\begin{multline*}
\b{f}_s=\nabla\left(\frac{E_s(\chi)}{\chi}\right)|\nabla\phi|\delta(\phi)
-\nabla\cdot\left(\Big(\frac{E_s(\chi)}{\chi}\Big)'\chi\b{n}\b{n}|\nabla\phi|\delta(\phi)\right)\\
+\frac{E_s(\chi)}{\chi}|\nabla\phi|\nabla\delta(\phi)-\nabla\phi\nabla\cdot\left(\frac{E_s(\chi)}{\chi}\delta(\phi)\frac{\nabla\phi}{|\nabla\phi|}\right)+\nabla p'.
\end{multline*}
Since $(E_s/\chi)'\chi=-E_s/\chi+E_s'$ and
\begin{gather*}
\nabla\cdot\left(\frac{E_s(\chi)}{\chi}\b{n}\b{n}|\nabla\phi|\delta(\phi)\right)-\nabla\phi\nabla\cdot\left(\frac{E_s(\chi)}{\chi}\delta(\phi)\frac{\nabla\phi}{|\nabla\phi|}\right)
=\frac{E_s(\chi)}{\chi}\delta(\phi)\b{n}\cdot\nabla\nabla\phi=\frac{E_s(\chi)}{\chi}\delta(\phi)\nabla|\phi|,
\end{gather*}
by absorbing a total gradient $\nabla(\frac{E_s(\chi)}{\chi}|\nabla\phi|\delta(\phi))$ into $\nabla p'$, we get
\begin{gather*}
\b{f}_s=-\nabla\cdot(E_s' \b{n}\b{n}|\nabla\phi|\delta(\phi))+\nabla\tilde{p}
=-\nabla\cdot(E_s' \b{n}\nabla\phi\delta(\phi))+\nabla\tilde{p}.
\end{gather*}
By the product rule
\begin{gather*}
-\nabla\cdot(E_s' \b{n}\nabla\phi\delta(\phi))
=-\nabla E_s'\cdot \b{n}\b{n}|\nabla\phi|\delta(\phi)
+E_s'\kappa \nabla\phi\delta(\phi)-E_s'\b{n}\cdot\nabla\nabla\phi-E_s'\nabla\phi \b{n}\cdot\nabla\delta(\phi),
\end{gather*}
that $\nabla\phi\b{n}\cdot\nabla\delta(\phi)=|\nabla\phi|\nabla\delta(\phi)$
and that $n\cdot\nabla\nabla\phi=\nabla|\phi|$, we can again absorb a total gradient $-\nabla(E_s'|\nabla\phi|\delta(\phi))$ into $\nabla\tilde{p}$. Finally, we obtain the expression 
\begin{gather}
\b{f}_s=-\nabla E_s'\cdot \b{n}\b{n}|\nabla\phi|\delta(\phi)+E_s'\kappa \nabla\phi\delta(\phi)+(\nabla E_s')|\nabla\phi|\delta(\phi)+\nabla p^*.
\end{gather}
Since $p^*$ can be combined with the pressure, we then have obtained the desired result.

\section{The force due to curvature}\label{app:curvf}
If possible, one may understand the derivation here in the distribution sense as one can take derivatives of discontinuous functions in the distribution sense. 

Note that 
\begin{gather}
(I-\b{n}\b{n})\cdot\nabla|\nabla\phi|=(n\cdot\nabla \frac{\nabla\phi}{|\nabla\phi|})|\nabla\phi|
\end{gather}
Using the identity $\kappa=-\nabla\cdot\b{n}$, we reduce $\b{f}_b$ to 
\begin{multline}
\b{f}_b=\nabla\cdot(-E_b\b{n}-(I-\b{n}\b{n})\cdot\nabla E_b'
-E_b'\b{n}\cdot\nabla\b{n})\nabla\phi \delta(\phi)\\
=(E_b\kappa -\Delta E_b'-\kappa\b{n}\cdot\nabla E_b'
+n_in_j\partial^2_{ij}E_b'-E_b'\partial_jn_i\partial_in_j)\nabla\phi \delta(\phi)
\end{multline}
Finally, for a given function $g$, we have 
\begin{multline}
(\nabla-\b{n}\b{n}\cdot\nabla)\cdot(\nabla g-\b{n}\b{n}\cdot\nabla g)
=\Delta g+\kappa\b{n}\cdot\nabla g-\b{n}\cdot\nabla(\b{n}\cdot\nabla g)
-\b{n}\b{n}:\nabla\nabla g\\
+(\b{n}\cdot\nabla\nabla\b{n}\cdot\b{n})\b{n}\cdot\nabla g
+(\b{n}\cdot\b{n})\b{n}\cdot\nabla(\b{n}\cdot\nabla g)
\end{multline}
Hence, the force is reduced to 
\begin{gather}
\b{f}_b=(E_b\kappa-\Delta_{tan}E_b'-E_b'\nabla\b{n}:\nabla\b{n}^T)\nabla\phi \delta(\phi)
\end{gather}
\bibliographystyle{mybst}
\bibliography{NumAnaPde}
\end{document}